\numberwithin{equation}{section}
\theoremstyle{plain}
\newtheorem{thm}{Theorem}[section]
\newtheorem{prop}{Proposition}[section]
\newtheorem{rem}{Remark}[section]
\def\Beta{\text{Beta}}
\newcommand{\Lower}[2]{\smash{\lower #1 \hbox{#2}}}
\newcommand{\ben}{\begin{enumerate}}
\newcommand{\een}{\end{enumerate}}
\newcommand{\bi}{\begin{itemize}}
\newcommand{\ei}{\end{itemize}}
\newcommand{\PitmanDiagram}[5]
{#1 \qquad
\begin{array}{c}
#2\\
\xrightarrow{#5}\\
\xleftarrow{#5}\\
#4
\end{array}
\qquad #3
}
\begin{document}

\begin{frontmatter}
\title{Posterior distributions of Gibbs-type priors}
\runtitle{Gibbs-type Priors}

\begin{aug}
\author[A]{\fnms{Lancelot F.}~\snm{James}\ead[label=e1]{lancelot@ust.hk}
}
\address[A]{{Department of Information Systems, Business Statistics
and Operations Management}, The Hong Kong University of Science and Technology\printead[presep={,\ }]{e1}}

\end{aug}

\begin{abstract}
Gibbs type priors have been shown to be natural generalizations of Dirichlet process (DP) priors used for intricate applications of Bayesian nonparametric methods. This includes applications to mixture models and to species sampling models arising in populations genetics. Notably these latter applications, and also applications where power law behavior such as that arising in natural language models are exhibited, provide instances where the DP model is wholly inadequate. Gibbs type priors include the DP, the also popular Pitman-Yor process and closely related normalized generalized gamma process as special cases. However, there is in fact a richer infinite class of such priors, where, 
despite knowledge about the exchangeable marginal structures produced by sampling $n$ observations, descriptions of the corresponding posterior distribution, a crucial component in Bayesian analysis, remain unknown. This paper presents descriptions of the posterior distributions for the general class, utilizing a novel proof that leverages the exclusive Gibbs properties of these models. The results are applied to several specific cases for further illustration.
\end{abstract}

\begin{keyword}[class=AMS]
\kwd[Primary ]{60C05, 60G09}
\kwd[; secondary ]{60G57,60E99}
\end{keyword}

\begin{keyword}
\kwd{Posterior distributions,  Pitman Yor process, Gibbs partitions, Poisson Dirichlet distributions, stable Poisson-Kingman distributions}
\end{keyword}

\end{frontmatter}

\section{Introduction}
The classic problem in nonparametric statistics is the estimation of the unknown probability measure say $P,$ when $X_{1},\ldots,X_{n}$ are iid observations from $P,$ possibly taking values in some abstract Polish space $\Xi.$ The optimal solution in this case is known to be the simply expressed empirical measure $\hat{P}_{n}=\sum_{i=1}^{n}\delta_{X_{i}}/n.$  When $\Xi$ is the real line one may work with the empirical distribution function  $\hat{F}_{n}=\sum_{i=1}^{n}\mathbb{I}_{\{X_{i}\leq x\}}/n$ for $-\infty\leq x\leq \infty.$ Bayesian nonparametric statistics~(BNP) has its origins in providing a tractable solution to a Bayesian analogue of this problem. Ferguson's~\cite{Ferg73} seminal work on the Dirichlet process(DP) as a prior distribution, for $P,$ on spaces of probability measures provides the most tractable of such solutions yielding a posterior distribution of $P$ given $X_{1},\ldots,X_{n}$ which, as a natural extension of the Dirichlet-Multinomial problem, is also a Dirichlet process that concentrates its mass around $\hat{P}_{n}$ as $n\rightarrow\infty.$ While  this requires a much more complex framework than merely using $\hat{P}_{n},$ the posterior distribution encodes much more distributional information about various functionals of $P,$ and importantly its simplicity, in conjunction with developments in computational procedures and modeling flexibility in terms of  applications suitable for machine learning, gave impetus to apply BNP methods to now significantly more complex data structures and applications where $X_{1},\ldots,X_{n}$ are commonly viewed as latent variables. For instance in a Bayesian mixture model~\cite{Lo1984}, one may model observables 
$Y_{1},\ldots,Y_{n}|X_{1},\ldots,X_{n},P$ with some joint distribution  $\pi(Y_{1},\ldots,Y_{n}|\mathbf{X},P)$ and a Bayesian model for statistical inference is completed by working with  posterior distribution of $P|X_{1},\ldots,X_{n}$ and the marginal distribution of $X_{1},\ldots,X_{n}$ and perhaps priors on parametric components. Inference is facilitated by computational procedures that exploit the explicit nature and tractability of the DP posterior and marginal distributions. In particular, let $P\sim \mathrm{DP}(\theta,H)$ denote a Dirichlet process with parameter $\theta>0$ and (non-random) base measure $H$ on $\Xi,$ such that $\mathbb{E}[P]=H.$ We assume $H$ to be non-atomic but it can be defined more generally. The general joint marginal distribution of $X_{1},\ldots,X_{n}$ follows an exchangeable Blackwell-MacQueen P\'olya Urn distribution~\cite{BMQ,Pit96}. Due to the discrete nature of the DP, this distribution admits a random number, denoted as $K_{n}\leq n$, of unique i.i.d. values $(\tilde{X}_{1},\ldots,\tilde{X}_{k})$ for $K_{n}=k$, as well as ties. These can be expressed in terms of clusters $\{A_{1},\ldots,A_{k}\}$, where $A_{j}=\{i:X_{i}=\tilde{X}_{j}\}$ forms a random partition of $[n]=\{1,2,\ldots,n\}$. The distribution of the random partition $\{A_{1},\ldots,A_{k}\}$ follows a Chinese restaurant process, denoted as $\mathrm{CRP}(0,\theta)$, which only depends on the sizes $|A_{j}|=n_{j}$. It can be expressed, as described in~\cite{Pit96}, in terms of the exchangeable partition probability function (EPPF) $p_{0,\theta}$ on $\mathbb{N}=\{1,2,\ldots\}$, restricted to $[n],$ as follows,
$
p_{0,\theta}(n_{1},\ldots,n_{k})=\frac{\theta^{k}\Gamma(\theta)}{\Gamma(\theta+n)}\prod_{i=1}^{k} {\Gamma(n_{i})},
$
and hence the joint distribution of $(X_{1},\ldots,X_{n})$ may be expressed as 
\begin{equation}
p_{0,\theta}(n_{1},\ldots,n_{k})\times \prod_{j=1}^{k}H(d\tilde{X}_{j})=\frac{\theta^{k}\Gamma(\theta)}{\Gamma(\theta+n)}\prod_{i=1}^{k} {\Gamma(n_{i})}\times \prod_{j=1}^{k}H(d\tilde{X}_{j})
\label{jointDP}
\end{equation}
Equivalently, the joint distribution may be built sequentially using the prediction rule $X_{n+1}|X_{1},\ldots,X_{n}$ given by
\begin{equation}
\mathbb{E}[P|X_{1},\ldots,X_{n}]=\frac{\theta}{\theta+n}H+\frac{n}{\theta+n}\hat{P}_{n}
\label{predictDP}
\end{equation}
which also encodes the Chinese restaurant process seating procedure.
\cite[Theorem 1]{Ferg73} shows that the posterior distribution of the DP is again a DP with parameter $\theta+n$ and base measure $\frac{\theta}{\theta+n}H+\frac{n}{\theta+n}\hat{P}_{n}$, as in~\eqref{predictDP}.

As discussed in more detail in~\cite{Pit96,Pit03} all species sampling models, that is discrete random probability measures, of the form $P=\sum_{k=1}^{\infty}P_{k}\delta_{V_{k}}$ where $(P_{k})_{k\ge 1}:=(P_{k})$ is a sequence of probabilities summing to $1,$ independent of $(V_{k})\overset{iid}\sim H$ are associated with a unique EPPF which is in bijection to the distribution of $(P_{k}).$ While the DP, which is a species sampling model, remains the all-purpose tool for BNP applications, there has been considerable interest in other discrete priors for $P,$ both in terms of theory and practical applications, that may exhibit comparable properties to the DP. Moreover there is particular interest in classes of models that exhibit modeling capabilities that the DP does not.  The EPPF $p_{0,\theta}$ of the DP has the desirable property of having Gibbs (product) form consistent for each restriction of $[n].$ This facilitates computational procedures and also has theoretical implications. As established by~\cite{Gnedin06,Pit03,Pit06} the  classes of $P$ admitting EPPFs of Gibbs form, while large is actually quite exclusive. This coincides with the class of $\alpha$-Gibbs priors which are defined generally for $-\infty\leq \alpha\leq 1.$ In particular here we consider the case of $0\leq \alpha<1,$ which is essentially generated by conditioning on the total sum of the jumps of an $\alpha$-stable subordinator, and then mixing with respect to a probability measure. The most prominent example of this class is the now popular Pitman-Yor~\cite{IJ2001,Pit96} process indexed by parameters $0\leq \alpha<1$ and $\theta>-\alpha.$ The DP arises as a special case by taking $\alpha=0.$ Similar to the DP, tractable expressions for the posterior distribution and marginal distribution of the PY process has been obtained in~\cite{Pit96}. Another prominent closely related class is the class of normalized generalized gamma models. In general for $0<\alpha<1,$ $\alpha$-Gibbs priors exhibit various modeling capabilities for statistical inference, not exhibited by the DP. We cite a few references~\cite{Bacallado,LomeliFavaro,FavaroLijoi2009,LijoiSpecies} with more extensive descriptions and literature review in~\cite{Deblasi}.
Furthermore insights and interpretations of more choices of such priors, which play a subtle role in our results, and relations to operations arising in combinatorial stochastic processes, may be obtained from~\cite{HJL,HJL2,HJL3,PYaku},  

As mentioned, \cite{Deblasi} provide a comprehensive survey of statistical applications and attributes exploiting the unique properties  of the $\alpha$-Gibbs class. These include applications to mixture models and novel methods for species sampling models in relation to population genetics. The latter class of applications provide instances where the DP model is wholly inadequate. They provide an affirmation to their question "Are Gibbs-Type Priors the Most Natural
Generalization of the Dirichlet Process?." However~\cite[p.218]{Deblasi} also note the following important missing component, 
\begin{quote}
For our purposes it is enough to
note that the derivation of posterior quantities in this setting
represents a challenging issue, which has not found a
satisfactory solution to date 
\end{quote}
Hence 
while descriptions of the EPPFs, and corresponding exchangeable distributions for $(X_{1},\ldots,X_{n})$ are available and as demonstrated in ~\cite{Deblasi} much can be done with this, the description of the posterior distribution of $P|X_{1},\ldots,X_{n}$ in the general $0<\alpha<1$ range, that is to say the range exhibiting power law behavior, is unavailable. Similar to the parametric Bayesian setting, where much can be deduced from the marginal distribution of $(X_{1},\ldots,X_{n})$, a description of the posterior distribution of $P|X_{1},\ldots,X_{n}$ remains central to BNP analysis.

The purpose of this paper is to provide an answer to this important missing component. That is we obtain two equivalent descriptions of the posterior distribution of $P|X_{1},\ldots,X_{n}$ for all $0<\alpha<1$ Gibbs priors. The first description is of a form similar to the description of posterior distributions for general species sampling models $P,$ as described in~\cite{Ferg73,IJ2003,JLP,Pit96}.  While the second description is quite specific to $\alpha$-Gibbs priors. Furthermore our proof of the posterior distribution is of interest as it is specifically exploits the Gibbs form of the EPPF. It is notable that a perhaps more crude form of the posterior distribution, without understanding the appropriate refinements,  may be obtained by utilizing the methods in~\cite[Section 7]{James2002}.
Moreover our posterior results encode important operations related to stick-breaking representations and size-biased sampling as well as Markov Chains in relation to coagulation and fragmentation operations as described in~\cite{BerFrag,DGM,HJL2,Pit06}. We believe that some of the latter operations, as in the closely related operations used in~\cite{Gasthaus,Wood}, will begin to play a more prominent role in statistical applications. However, we shall not elaborate on this here. 
\subsection{Outline}
The exposition proceeds as follows. Section 2 provides details of the random probability measure formed by normalizing an $\alpha$- stable subordinaor, this also corresponds to a  Pitman-Yor process with parameters $(\alpha,0)$ which parallel the Dirichlet process. We then proceed to describe properties of the general $\alpha$ Gibbs priors, which are constructed from $(P_{k})$ following $\alpha$ stable Poisson Kingman distributions with mixing as described in~\cite{Pit03,Pit06}.  Importantly we describe its known EPPF, prediction rule and the analogous marginal distribution of $(X_{1},\ldots,X_{n}).$ Section 3 provides the main results describing the posterior distribution for the general Gibbs class, where known results for the normalized stable process corresponding to a PY process with parameters $(\alpha,0)$ plays a central role. We present a novel proof which exploits the special Gibbs properties of theses priors. 
Section 4 proceeds with  applications of our results to two specific cases for further illustration.
\section{Normalized Stable, Pitman-Yor and general Gibbs priors}
We start with defining the canonical Gibbs prior for all $0<\alpha<1$, which is the random probabilty measure defined by normalizing a stable subordinator. This is also a Pitman-Yor process with parameters $(\alpha,0)$ and is the bookend of the DP which is a Pitman-Yor process with parameters $(0,\theta)$ for some $\theta>0.$
Following~\cite{Kingman75,PPY92,Pit03,Pit06,PY97}
let $(T_{\alpha}(t): t\ge 0)$ denote an $\alpha$ stable subordinator, with $T_{\alpha}(0):=0,$ which means, as for the case of any subordinator, that for $s<t,$  $T_{\alpha}(t)-T_{\alpha}(s)\overset{d}=T_{\alpha}(t-s)$ and is independent of $T_{\alpha}(s).$ Furthermore, special to the stable case, for each $s,$ 
$T_{\alpha}(s)\overset{d}=s^{
{1}/{\alpha}}T_{\alpha}(1),$ where $T_{\alpha}(1):=T_{\alpha}$ is stable random variable with density $f_{\alpha}(t)$ and Laplace transform $\mathbb{E}[{\mbox e}^{-\lambda T_{\alpha}}]={\mbox e}^{-\lambda^{\alpha}}.$ Now considering $(T_{\alpha}(t): t\in[0,1]),$ we may construct a random distribution function for each $y\in[0,1]$ as, 
$$
F_{\alpha,0}(y)\overset{d}=\frac{T_{\alpha}(y)}{T_{\alpha}(1)}\overset{d}=\sum_{k=1}^{\infty}P_{k}\mathbb{I}_{\{U_{k}\leq y\}},
$$
where here $(P_{k})\sim \mathrm{PD}(\alpha,0)$ denoting it follows a Poisson-Dirichlet distribution with parameters $(\alpha,0),$ formed  by normalizing the ranked jumps of $(T_{\alpha}(t): t\in[0,1]),$ and is independent of $(U_{k})\overset{iid}\sim\mathrm{Uniform}[0,1].$
In turn, sampling $X_{1},\ldots,X_{n}|F_{\alpha,0}\overset{iid}\sim F_{\alpha,0},$
that is, $(X_{i}\overset{d}=F_{\alpha,0}^{-1}(U'_{i});$ $U'_{i}\overset{iid}\sim\mathrm{Uniform}[0,1], i\in [n])$, produces $K_{n}=k$ unique values $(\tilde{X}_{1},\ldots,\tilde{X}_{k})\overset{iid}\sim\mathrm{Uniform}[0,1]$ and a $\mathrm{CRP}(\alpha,0)$-partition of $[n],$ $\{A_{1},\ldots,A_{k}\},$ where $A_{j}=\{i:X_{i}=\tilde{X}_{j}\},$
with size $|A_{j}|=n_{j},$ for $j=1,\ldots,k,$ with an EPPF denoted by 
\begin{equation}
p_{\alpha}(n_{1},\ldots,n_{k})=\frac{\alpha^{k-1}\Gamma(k)}{\Gamma(n)}\prod_{j=1}^{k}(1-\alpha)_{n_{j}-1},
\label{canonEPPF}
\end{equation}
where, $(x)_n = x(x+1)\cdots(x+n-1) = {\Gamma(x+n)}/{\Gamma(x)}$ denotes the Pochhammer symbol, for any non-negative integer $x$. 
Furthermore, the probability of the number of blocks $K_{n}=k$ can be expressed as $\mathbb{P}_{\alpha,0}^{(n)}(k):=\mathbb{P}_{\alpha,0}(K_n = k)=
{\alpha^{k-1}\Gamma(k)} S_\alpha(n,k) / {\Gamma(n)}$, where $S_\alpha(n,k) =  
[{\alpha^k k!}]^{-1} \sum_{j=1}^k (-1)^j \binom{k}{j} (-j\alpha)_n$ denotes the generalized Stirling number of the second kind.

Recall that a DP may be defined by normalizing a gamma subordinator, and corresponds to a Pitman-Yor process with parameters $(0,\theta).$  
$F_{\alpha,0}$ is a the random cdf corresponding to a Pitman-Yor process with parameters $(\alpha,0)$ where its base measure is $\mathbb{U}$ corresponding to a $\mathrm{Uniform}[0,1]$ distribution.
Now notice for any (non-atomic) $H$ that $P_{\alpha,0}(\cdot):=F_{\alpha,0}(H(\cdot))$ only alters the values of the atoms of $F_{\alpha,0}$ and hence  $P_{\alpha,0}$ with law denoted $\mathscr{PY}(\alpha,0,H)$ is also a Pitman-Yor process with parameters $(\alpha,0)$ and base measure $H.$  
\begin{rem}
We shall use the notation $F\circ H$ to denote the composition of functions.
\end{rem}

Note that important to applications  involving species sampling as described in \cite{Deblasi,FavaroLijoi2009,LijoiSpecies}, unlike the DP case, the total mass $T_{\alpha}$ is not independent of $(P_{k}).$
Now we mention a few more technical matters which also has broader relevance to the origins of the study of $(P_{k})$ as described in~\cite{Pit03,Pit06,PY97}.  Letting $(L_{t}; t>0)$ denote the local time process, starting at $0,$ of a strong Markov process whose lengths of excursion on $[0,1]$ correspond to $(P_{k})\sim \mathrm{PD}(\alpha,0)$ and 
$T_{\alpha}(\ell)=\inf\{t:L_{t}>\ell\}, \ell\ge 0$, corresponds to its inverse local time. Due to the scaling identity
(see~\cite{PY92}),
\begin{equation}
L_{1}\overset{d}=\frac{L_{t}}{t^{\alpha}}\overset{d}=\frac{s}{[{{T_{\alpha}(s)]}^{\alpha}}}\overset{d}=T^{-\alpha}_{\alpha},
\label{scaling}
\end{equation}
the local time up to time $1,$ $L_{1}\overset{d}=T^{-\alpha}_{\alpha},$ follows a Mittag-Leffler distribution with density $g_{\alpha}(z):=f_{\alpha}(z^{-
1/\alpha})z^{-
{1}/\alpha-1}/\alpha,$ satisfying
\begin{equation}
L_{1}:=\Gamma(1-\alpha)^{-1}\lim_{\epsilon\rightarrow
0}\epsilon^{\alpha}|\{i:P_{i}\ge \epsilon\}|~\mathrm{a.s.}
\label{inverselocaltime}
\end{equation}
Relating to~\cite{Deblasi,FavaroLijoi2009,LijoiSpecies}, $T^{-\alpha}_{\alpha}$ corresponds to the $\alpha$-diversity of $(P_{k})\sim \mathrm{PD}(\alpha,0)$ and is the almost sure limit of $n^{-\alpha}K_{n},$ as shown in \cite{Pit03}.

\subsection{$\alpha$-stable Poisson-Kingman distributions and Gibbs partitions}
Now conditioning $(P_{i})$ on $T_{\alpha}=t$~(or $L_{1}=t^{-\alpha}$) leads to the distribution of $(P_{i})|T_{\alpha}=t\sim \mathrm{PD}(\alpha|t)$, and for $h(t)$ a non-negative function with $\mathbb{E}[h(T_{\alpha})]=1,$ one may, as in~\cite{Pit03}, define the $\alpha$-stable Poisson-Kingman distribution with mixing distribution $\nu(dt)/dt=h(t)f_{\alpha}(t),$ and write $(P_{\ell})\sim \mathrm{PK}_{\alpha}(h\cdot f_{\alpha})$ defined as 
$$
\mathrm{PK}_{\alpha}(h\cdot f_{\alpha}):=\int_{0}^{\infty}\mathrm{PD}(\alpha|t)h(t)f_{\alpha}(t)dt=\int_{0}^{\infty}\mathrm{PD}(\alpha|s^{-\frac{1}{\alpha}})h(s^{-\frac{1}{\alpha}})g_{\alpha}(s)ds.
$$
Relevant to the construction of general Pitman-Yor processes with parameters $(\alpha,\theta),$ setting $h(t)=t^{-\theta}/\mathbb{E}[T^{-\theta}_{\alpha}]$, for $\theta>-\alpha$, leads to $(P_{\ell})\sim \mathrm{PD}(\alpha,\theta),$ corresponding to the important two-parameter Poisson-Dirichlet distribution as described in~\cite{PPY92, Pit96,Pit03,Pit06,PY97}, whose size biased re-arrangement, say, $(\tilde{P}_{\ell})\sim \mathrm{GEM}(\alpha,\theta),$ where $\mathrm{GEM}(\alpha,\theta)$ is the two-parameter Griffiths-Engen-McCloskey distribution, and is widely used in applications~\cite{BerFrag,IJ2001,PPY92,Pit96,Pit06}. 
The inverse local time at $1$ of a process with lengths $(P_{\ell})\sim \mathrm{PD}(\alpha,\theta),$ say, $T_{\alpha,\theta},$ has density $f_{\alpha,\theta}(t)=t^{-\theta}f_{\alpha}(t)/\mathbb{E}[T^{-\theta}_{\alpha}]$, and the corresponding local time at $1$ or its $\alpha$-diversity, $T^{-\alpha}_{\alpha,\theta}\sim \mathrm{ML}(\alpha,\theta),$ denoting that it has a
 $(\alpha,\theta)$ generalized Mittag-Leffler distribution with density $g_{\alpha,\theta}(s)=s^{\theta/\alpha}g_{\alpha}(s)/\mathbb{E}[T^{-\theta}_{\alpha}]$. This choice of $(P_{k})\sim \mathrm{PD}(\alpha,\theta),$  leads to $P_{\alpha,\theta}\sim \mathscr{PY}(\alpha,\theta,H),$ corresponding to a general Pitman-Yor process. Another important case is the normalized generalized gamma process which corresponds to $h(t)={\mbox e}^{-\lambda t}{\mbox e}^{\lambda^{\alpha}}$ for some $\lambda>0.$
In general when $(P_{\ell})\sim \mathrm{PK}_{\alpha}(h\cdot f_{\alpha})$, $F(y):=\sum_{k=1}^{\infty}P_{k}\mathbb{I}_{\{U_{k}\leq y\}}$ is the corresponding random cdf defined similarly as $F_{\alpha,0}$, and sampling $n$ variables from $F$, as in the $\mathrm{PD}(\alpha,0)$ case, leads to the general class of $\alpha\in(0,1)$ Gibbs partitions of $[n]$, with an EPPF , denoted by $\mathrm{PK}_{\alpha}(h\cdot f_{\alpha})-\mathrm{EPPF},$ as in ~\cite{Gnedin06,Pit03,Pit06},
\begin{equation}
p^{[\nu]}_{\alpha}(n_{1},\ldots,n_{k})={\Psi}^{[\alpha]}_{n,k}\times p_{\alpha}(n_{1},\ldots,n_{k}),
\label{VEPPF}
\end{equation}
where, important for our exposition, using the interpretation of the expressions derived by \cite{Gnedin06,Pit03,Pit06} in~\cite{HJL,HJL2},
$$
{\Psi}^{[\alpha]}_{n,k} =\mathbb{E}[h(T_{\alpha})|K_{n}=k]=\mathbb{E}\bigg[h\big(Y^{(n-k\alpha)}_{\alpha,k\alpha}\big)\bigg].
$$ 
In the first expectation, $T_{\alpha}|K_{n}=k$ is evaluated for the $\mathrm{PD}(\alpha,0)$ case with $K_{n}\sim 
\mathbb{P}_{\alpha,0}^{(n)}(k).$ The second equality follows from the fact that $T_{\alpha}|K_{n}=k$ equates in distribution 
to a variable 
$Y^{(n-k\alpha)}_{\alpha,k\alpha},$ with density $f^{(n-k\alpha)}_{\alpha,k\alpha}(t),$ such that~(pointwise), as in~\cite[eq. (2.13), p. 323]{HJL2},
\begin{equation}
Y^{(n-k\alpha)}_{\alpha,k \alpha}\overset{d}=\frac{T_{\alpha,k\alpha}}{B_{k\alpha,n-k\alpha}}=\frac{T_{\alpha,n}}{B^{\frac1\alpha}_{k,\frac{n}{\alpha}-k}},
\label{jamesidspecial}
\end{equation}
where variables in each ratio are independent, and throughout, $B_{a,b}$ denotes a $\mathrm{Beta}(a,b)$ random variable. The expression in~\eqref{jamesidspecial} also correspond to the notion of a conditional $\alpha$-diversity as described in~\cite{Deblasi,FavaroLijoi2009,LijoiSpecies}. See the forthcoming Section~\ref{sec:interpret} for more  comments in relation to their work.

These points show that  $F$ is the random cdf corresponding to a Gibbs-type prior. Using $P:=F\circ H,$ we may write $P\sim \mathrm{PK}_{\alpha}(h\cdot f_{\alpha},H)$ to denote the general Gibbs-type prior with base measure $H,$ as the full generalization of $P_{\alpha,0}.$ With the above information we can now describe the marginal distribution of $X_{1},\ldots,X_{n}$ which, given $P,$ is sampled iid from $P.$ That is for $X_{1},\ldots,X_{n}|P$, iid $P$ and $P\sim\mathrm{PK}_{\alpha}(h\cdot f_{\alpha},H)$, it follows that the marginal distribution of  $X_{1},\ldots,X_{n}$ can be expressed as
\begin{equation}
{\Psi}^{[\alpha]}_{n,k}\times p_{\alpha}(n_{1},\ldots,n_{k})\times \prod_{j=1}^{k}H(d\tilde{X}_{j})
\label{marginal}
\end{equation}
where $(\tilde{X}_{1},\ldots,\tilde{X}_{k})$ are the $K_{n}=k,$ unique values of $X_{1},\ldots,X_{n}$ with respective multiplicities $(n_{1},\ldots,n_{k}).$ For clarity the distribution for $K_{n}$ in this instance is
\begin{equation}
\label{genKndist}
\mathbb{P}^{[\nu]}(K_{n}=k)={\Psi}^{[\alpha]}_{n,k}\mathbb{P}^{(n)}_{\alpha,0}(k)
\end{equation}
Equivalently one may also express the~\eqref{marginal} in terms of the product over conditional distributions of the form $X_{i}|X_{1},\ldots,X_{i-1}$. In particular the distribution of $X_{n+1}|X_{1},\ldots,X_{n}$ can be obtained from~\eqref{marginal} as 
\begin{equation}\label{eq:PKprediction_rule}
 \frac{{\Psi}^{[\alpha]}_{n+1,k+1}}{{\Psi}^{[\alpha]}_{n,k}}\frac{k\alpha}{n}H + \frac{{\Psi}^{[\alpha]}_{n+1,k}}{{\Psi}^{[\alpha]}_{n,k}}\sum_{j=1}^{k}\frac{n_{j}-\alpha}{n}\delta_{\tilde{X}_{j}}
\end{equation} 
In a Chinese restaurant metaphor this describes the chance of customer ${n+1}$ joining a new or existing table and choosing an existing value or new value from $H.$ It follows that the prediction rule is also equivalent to $\mathbb{E}[P\mid X_{1},\ldots,X_{n}],$
which naturally relates to the posterior, or conditional, distribution of $P\mid X_{1},\ldots,X_{n}.$
\begin{rem}Note the expression in~\eqref{eq:PKprediction_rule}, which is based on the results in~\cite{HJL,HJL2}, is equivalent to the more typical form of the prediction rule appearing in the literature which uses the form of the EPPF  $p^{[\nu]}_{\alpha}(n_{1},\ldots,n_{k})= V_{n,k}\prod_{j=1}^{k}(1-\alpha)_{n_{j}-1}$ for some weights satisfying $V_{n,k}=(n-k\alpha)V_{n+1,k}+V_{n+1,k+1}$ for $n=1,2,\ldots,$ and $k=1,\ldots,n$ with $V_{1,1}=1.$ See \cite[Section 1.2]{HJL2} for more details.
\end{rem}

The posterior distribution is fundamental for applications in Bayesian non-parametric statistics, and as we shall show, has various interpretations. However, with the exception of descriptions of the posterior distribution of the Pitman-Yor family $\mathscr{PY}(\alpha,\theta,H),$ obtained by~\cite[Corollary 20]{Pit96}, and the generalized gamma case with $h(t)={\mbox e}^{-\lambda t}{\mbox e}^{\lambda^{\alpha}},$ which can be obtained from~\cite{JLP}, descriptions of the posterior distributions for other choices of $h(t)$ are generally unknown. Our interest in this work is to provide two such descriptions for all $h$, and also comment briefly about connections to operations arising in~\cite{DGM,PPY92,PY97}.
\section{Posterior distributions of Gibbs-type priors}We now proceed to describe the posterior distribution of $P|X_{1},\ldots,X_{n}$ when
$P\sim \mathrm{PK}_{\alpha}(h\cdot f_{\alpha},H)$. We first recall the description of the posterior distribution of the $P=P_{\alpha,0}\sim \mathscr{PY}(\alpha,0,H),$ which can be read from \cite[Corollary 20]{Pit06} and is equal in distribution to

\begin{equation}\label{fpost3}
\tilde{P}^{(n)}_{\alpha,0}\overset{d}=B_{k\alpha,n-k\alpha}P_{\alpha,k\alpha} + (1 -B_{k\alpha,n-k\alpha}) \sum_{j=1}^k D_j \delta_{\tilde{X}_j},
\end{equation}
where $(D_{1},\ldots,D_{k})\sim \mathrm{Dir}(n_{1}-\alpha,\ldots,n_{k}-\alpha),$ denoting a Dirichlet vector with $k$ parameters as indicated, is independent of the also mutually independent variables $B_{k\alpha,n-k\alpha}\sim \mathrm{Beta}(k\alpha,n-k\alpha)$ and $P_{\alpha,k\alpha}\sim \mathscr{PY}(\alpha,k\alpha,H),$ with inverse local time $T_{\alpha,k\alpha},$ and otherwise $(\tilde{X}_{1},\ldots,\tilde{X}_{k})$ are the observed (hence fixed) unique values drawn iid from $H.$ A second, less well known description for the posterior distribution may be read from~\cite[Section 6.4, Proposition 6.6]{JamesPGarxiv}, and otherwise follows by applying~\cite[Proposition 21]{PY97} to~
\eqref{fpost3} to obtain
\begin{equation}\label{fpost4}
\tilde{P}^{(n)}_{\alpha,0}\overset{d}=P_{\alpha,n}\circ(B_{k,\frac{n}{\alpha}-k}H+(1-B_{k,\frac{n}{\alpha}-k})\sum_{j=1}^kd_j\delta_{\tilde{X}_j}),
\end{equation}
where $P_{\alpha,n}\sim \mathscr{PY}(\alpha,n,\mathbb{U})$, with inverse local time $T_{\alpha,n},$
independent of the random probability measure $B_{k,\frac{n}{\alpha}-k}H+(1-B_{k,\frac{n}{\alpha}-k})\sum_{j=1}^kd_j\delta_{\tilde{X}_j},$ where furthermore 
$(d_{1},\ldots,d_{k})\sim \mathrm{Dir}(\frac{n_{1}-\alpha}{\alpha},\ldots,\frac{n_{k}-\alpha}{\alpha}),$ is independent of  $B_{k,\frac{n}{\alpha}-k}.$

\begin{rem}
One may notice that the representations in~\eqref{fpost3} and \eqref{fpost4} correspond to the respective representations of the distribution of $T_{\alpha}|K_{n}=k$ in~\eqref{jamesidspecial}.
\end{rem}

\begin{rem}
A significant feature of the representation in~\eqref{fpost4} is that it encodes the dual coagulation and $\mathrm{PD}(\alpha,1-\alpha)$ fragmentation operations described in~\cite{DGM}, by setting $n=1.$ The case of general $n$ extends this notion. 
\end{rem}

Define, for some fixed positive $\omega>0,$ 
\begin{equation}
\label{hpost}
h^{(\omega)}_{\alpha,\theta}(t)=\frac{t^{-\theta}h(t/\omega)}{\mathbb{E}[h(T_{\alpha,\theta}/\omega)]\mathbb{E}[T^{-\theta}_{\alpha}]}.                                                                                                                            
\end{equation}
We shall encounter $\mathrm{PK}_{\alpha}(h^{(\omega)}_{\alpha,\theta}\cdot f_{\alpha})$ distributions in the results below in particular for the choices of $\theta=k\alpha$ and $\omega=b$ leading to $h^{(b)}_{\alpha,k\alpha}(t)$ and $\theta=n,\omega=b^{1/\alpha}$ leading to $h^{(b^{1/\alpha})}_{\alpha,n}(t).$ With these results, we proceed with two descriptions of the posterior distribution of $P|X_{1},\ldots,X_{n}$ for $P\sim\mathrm{PK}_{\alpha}(h\cdot f_{\alpha},H).$

\subsection{A first description of the posterior distribution for 
$P\sim\mathrm{PK}_{\alpha}(h\cdot f_{\alpha},H)$}

In the first description we shall encounter inverse local times
$\hat{T}_{\alpha,k\alpha}$ with marginal density $\hat{h}_{\alpha,k\alpha}(t)f_{\alpha}(t),$ where for $R_{k}\sim \mathrm{Beta}(k\alpha,n-k\alpha),$
\begin{equation}
\label{hpost1}
\hat{h}_{\alpha,k\alpha}(t)=\frac{t^{-k\alpha}\mathbb{E}[h(t/R_{k})]}{\mathbb{E}[h(T_{\alpha,k\alpha}/R_{k})]\mathbb{E}[T^{-k\alpha}_{\alpha}]}
=\frac{t^{-k\alpha}\mathbb{E}[h(t/B_{k\alpha,n-k\alpha})]}
{\mathbb{E}[T^{-k\alpha}_{\alpha}]{\Psi}^{[\alpha]}_{n,k}}.                                                                                                                            
\end{equation}

\begin{thm}\label{thm:post1}
Let $P\sim\mathrm{PK}_{\alpha}(h\cdot f_{\alpha},H)$ with inverse local time $T$ having density $h(t)f_{\alpha}(t)dt$. Then, given $X_1, \ldots, X_n$, the posterior distribution of $P$ is equivalent to the distribution of the following random probability measure:
\begin{equation}\label{fpost1}
\tilde{P}^{(n)}\overset{d}=\hat{R}_k \hat{P}_{\alpha,k\alpha} + (1 -\hat{R}_k) \sum_{j=1}^{k} D_j \delta_{\tilde{X}_{j}},
\end{equation}
where $\hat{P}_{\alpha,k\alpha}$ has $\alpha$ diversity $\hat{T}^{-\alpha}_{\alpha,k\alpha}.$ The variables satisfy the following properties 
\begin{enumerate}
\item $(D_1, \ldots, D_k) \sim \mathrm{Dir}(n_1 - \alpha, \ldots, n_k - \alpha)$ independent of  $(\hat{R}_k, \hat{T}_{\alpha,k\alpha})$
\item The posterior distribution of $T|K_{n}=k$ is equivalent to the distribution of $\hat{T}_{\alpha,k\alpha}/\hat{R}_{k}$.
\item The joint density of $(\hat{R}_{k},\hat{T}_{\alpha,k})$ is
\begin{equation}
f_{\hat{R}_k,\hat{T}_{\alpha,k\alpha}}(b,t) = \frac{[h(t/b)]}{\mathbb{E}[h(T_{\alpha,k\alpha}/R_k)]}f_{R_k}(b)f_{\alpha,k\alpha}(t),
\label{jointRT}
\end{equation}
where $\mathrm{R_k} \sim \mathrm{Beta}(k\alpha, n-k\alpha)$, with density $f_{R_{k}}.$

\item $\hat{R}_k$ has marginal density
$$
f_{\hat{R}_k}(b) = \frac{\mathbb{E}[h(T_{\alpha,k\alpha}/b)]}{{\Psi}^{[\alpha]}_{n,k}}f_{R_k}(b),
$$

\item $\hat{P}_{\alpha,k\alpha}|\hat{R}_{k}=b\sim \mathrm{PK}_{\alpha}(h^{(b)}_{\alpha,k\alpha}\cdot f_{\alpha},H)$ meaning that $\hat{P}_{\alpha,k\alpha}\overset{d}=\sum_{l=1}^{\infty}W_{l}\delta_{V'_{l}}$, where $(W_{v})|\hat{R}_{k}=b$ has the $\mathrm{PK}_{\alpha}$ law specified by
$$
\int_{0}^{\infty} \mathrm{PD}(\alpha | t) \frac{h(t/b)}{\mathbb{E}[h(T_{\alpha,k\alpha}/b)]} f_{\alpha,k\alpha}(t) dt:=
\mathrm{PK}_{\alpha}(h^{(b)}_{\alpha,k\alpha}\cdot f_{\alpha})
$$
where $f_{\alpha,k\alpha}(t)$ is the density of $T_{\alpha,k\alpha},$
independent of $(V'_{l})\overset{iid}\sim H.$
\item The marginal density of $\hat{T}_{\alpha,k\alpha}$ can be expressed as $\hat{h}_{\alpha,k\alpha}(t)f_{\alpha}(t),$ specified as in~\eqref{hpost1} which means that 
$
(W_{l})\sim \mathrm{PK}_{\alpha}(\hat{h}_{\alpha,k\alpha}\cdot f_{\alpha}).
$
\end{enumerate}
\end{thm}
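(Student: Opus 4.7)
The plan is to treat the law of $P\sim\mathrm{PK}_{\alpha}(h\cdot f_{\alpha},H)$ as a change of measure of $P_{\alpha,0}\sim\mathscr{PY}(\alpha,0,H)$ with Radon-Nikodym density $h(T_{\alpha})$, where $T_{\alpha}$ is the inverse local time of the underlying unnormalized stable measure, and then to push this tilting through the known posterior \eqref{fpost3}. The crucial property that makes this work cleanly is the Gibbs form: under $\mathscr{PY}(\alpha,0,H)$, the conditional law of $T_{\alpha}$ given $(X_{1},\ldots,X_{n})$ depends on the data only through $K_{n}=k$ and equals in distribution $T_{\alpha,k\alpha}/B_{k\alpha,n-k\alpha}$ by \eqref{jamesidspecial}, so the weight $h(T_{\alpha})$ interacts only with the inverse-local-time piece of \eqref{fpost3} and leaves the Dirichlet piece untouched.

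Concretely, I would write the joint law of $(P,X_{1},\ldots,X_{n})$ under $\mathrm{PK}_{\alpha}(h\cdot f_{\alpha},H)$ as the $\mathscr{PY}(\alpha,0,H)$ joint law times $h(T_{\alpha})$, marginalize to recover \eqref{marginal} with normalizer $\Psi^{[\alpha]}_{n,k}=\mathbb{E}[h(T_{\alpha,k\alpha}/B_{k\alpha,n-k\alpha})]$, and then apply Bayes to conclude that the posterior equals $\tilde{P}^{(n)}_{\alpha,0}$ in \eqref{fpost3} tilted by $h(T_{\alpha,k\alpha}/B_{k\alpha,n-k\alpha})/\Psi^{[\alpha]}_{n,k}$. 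Since in \eqref{fpost3} the triple $\bigl(B_{k\alpha,n-k\alpha},P_{\alpha,k\alpha},(D_{1},\ldots,D_{k})\bigr)$ is mutually independent and the tilt depends only on $(B_{k\alpha,n-k\alpha},T_{\alpha,k\alpha})$, the Dirichlet vector $(D_{1},\ldots,D_{k})\sim\mathrm{Dir}(n_{1}-\alpha,\ldots,n_{k}-\alpha)$ survives unchanged and independent of the tilted pair $(\hat{R}_{k},\hat{T}_{\alpha,k\alpha})$, establishing (1). The joint density \eqref{jointRT} in (3) then follows by multiplying $f_{R_{k}}(b)f_{\alpha,k\alpha}(t)$ by the tilt and identifying the normalizer via \eqref{jamesidspecial}, and (2) is the tilted version of $T_{\alpha}\mid K_{n}=k\overset{d}{=}T_{\alpha,k\alpha}/B_{k\alpha,n-k\alpha}$.

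Items (4)--(6) drop out of (3) by routine marginalization and conditioning: integrate in $t$ for (4); condition on $\hat{R}_{k}=b$ and match the conditional density of $\hat{T}_{\alpha,k\alpha}$ with the mixing density of $\mathrm{PK}_{\alpha}(h^{(b)}_{\alpha,k\alpha}\cdot f_{\alpha})$ from \eqref{hpost} to obtain (5); integrate out $\hat{R}_{k}$ in (5) to recover $\hat{h}_{\alpha,k\alpha}\cdot f_{\alpha}$ from \eqref{hpost1} and hence (6). The step I expect to require the most care is verifying that the tilt affects only $(B_{k\alpha,n-k\alpha},T_{\alpha,k\alpha})$ and not the Dirichlet factor; this rests squarely on the exclusive Gibbs property that $T_{\alpha}\mid(X_{1},\ldots,X_{n})$ under $\mathscr{PY}(\alpha,0,H)$ reduces to $T_{\alpha}\mid K_{n}=k$, which is precisely the structural input the paper advertises as the engine of the novelty of the argument.
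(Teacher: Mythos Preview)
Your proposal is correct and follows essentially the same route as the paper: treat $\mathrm{PK}_{\alpha}(h\cdot f_{\alpha},H)$ as the $h(T_{\alpha})$-tilt of $\mathscr{PY}(\alpha,0,H)$, invoke the known $\mathscr{PY}(\alpha,0,H)$ posterior \eqref{fpost3} together with $T_{\alpha}\mid K_{n}=k\overset{d}{=}T_{\alpha,k\alpha}/B_{k\alpha,n-k\alpha}$ from \eqref{jamesidspecial}, and use the Gibbs form of the EPPF to divide and multiply by $\Psi^{[\alpha]}_{n,k}$, yielding $\mathbb{E}[\Omega(P)\mid X_{1},\ldots,X_{n}]=\mathbb{E}[\Omega(\tilde{P}^{(n)}_{\alpha,0})h(T_{\alpha,k\alpha}/B_{k\alpha,n-k\alpha})]/\Psi^{[\alpha]}_{n,k}$. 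Your explicit isolation of the Dirichlet factor and the subsequent marginalizations for (4)--(6) are exactly the refinements the paper leaves implicit.
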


\subsection{A second description of the posterior distribution for 
$P\sim\mathrm{PK}_{\alpha}(h\cdot f_{\alpha},H)$}\label{sec:postfrag}
We now proceed to provide another description of the posterior distribution based on \eqref{fpost4}. Similar to \eqref{hpost1} , there is the inverse local time $\hat{T}_{\alpha,n}$ with marginal density  $\tilde{h}_{\alpha,n}(t)f_{\alpha}(t),$ where for $\beta_{k}\sim \mathrm{Beta}(k,\frac{n}{\alpha}-k),$
\begin{equation}
\label{hpost2}
\tilde{h}_{\alpha,n}(t)=\frac{t^{-n}\mathbb{E}[h(t/\beta^{1/\alpha}_{k})]}{\mathbb{E}[h(T_{\alpha,n}/\beta^{1/\alpha}_{k})]\mathbb{E}[T^{-n}_{\alpha}]}=\frac{t^{-n}\mathbb{E}[h(tB^{-1/\alpha}_{k,\frac{n}{\alpha}-k})]}{{\Psi}^{[\alpha]}_{n,k}\mathbb{E}[T^{-n}_{\alpha}]}
\end{equation}
\begin{rem}
As we shall mention again in Section~\ref{sec:interpret}, the density in~\eqref{hpost2} equates with density in \cite[eq. (3.8)]{HJL2}, in relation to applying sequentially the  $\mathrm{PD}(\alpha,1-\alpha)$ fragmentation operation of ~\cite{DGM} $n$ times to $(P_{k})\sim \mathrm{PK}_{\alpha}(h\cdot f_{\alpha}).$
\end{rem}

\begin{thm}\label{thm:post2}
Let $P \sim \mathrm{PK}_{\alpha}(h\cdot f_{\alpha},H)$. Then, given $X_1, \ldots, X_n$, the posterior distribution of $P$ is equal to the distribution of the following random probability measure:
\begin{equation}
\label{fpost2}
\tilde{P}^{(n)}\overset{d}=\hat{P}_{\alpha,n}\circ (\hat{\beta}_{k}H+(1-\hat{\beta}_{k})\sum_{j=1}^{k}d_j\delta_{\tilde{X}_{j}})
\end{equation}
\begin{enumerate}
\item $(d_1, \ldots, d_k) \sim \mathrm{Dir}(\frac{n_1 - \alpha}{\alpha}, \ldots, \frac{n_k - \alpha}{\alpha})$, independent of $(\hat{\beta}_k, \hat{P}_{\alpha,n})$
\item The distribution of $T|K_{n}=k$ is equivalent to $\hat{T}_{\alpha,n}/\hat{\beta}^{1/\alpha}_{k},$ where $(\hat{\beta}_{k},\hat{T}_{\alpha,n})$ have joint density,
\begin{equation}
\label{genjointfragden}
f_{\hat{\beta}_{k},\hat{T}_{\alpha,n}}(b,t)=\frac{h(t/b^{1/\alpha})}{\mathbb{E}[h(T_{\alpha,n}/\beta^{1/\alpha}_k)]}f_{{\beta}_k}(b)f_{\alpha,n}(t),
\end{equation}
\item $\hat{\beta}_k$ has marginal density
$$
f_{\hat{\beta}_{k}}(b) = \frac{\mathbb{E}[h(T_{\alpha,n}/b^{1/\alpha})]}{\mathbb{E}[h(T_{\alpha,n}/\beta^{1/\alpha}_k)]}f_{{\beta}_k}(b),
$$
where $\beta_k \sim \mathrm{Beta}(k, n/\alpha-k)$, with density $f_{\beta_{k}}.$
\item $\hat{P}_{\alpha,n}|\hat{\beta}_{k}=b$ is such that $\hat{P}_{\alpha,n}\overset{d}=\sum_{v=1}^{\infty}Q_{v}\delta_{\tilde{U}_{v}}$, where $(Q_{v})|\hat{\beta}_{k}=b$ has the $\mathrm{PK}_{\alpha}$ law specified by
$$
\int_{0}^{\infty} \mathrm{PD}(\alpha | t) \frac{h(t/b^{1/\alpha})}{\mathbb{E}[h(T_{\alpha,n}/b^{1/\alpha})]} f_{\alpha,n}(t) dt:=\mathrm{PK}_{\alpha}(h^{(b^{1/\alpha})}_{\alpha,n}\cdot f_{\alpha}),
$$
where $f_{\alpha,n}(t)$ is the density of $T_{\alpha,n},$ independent of $(\tilde{U}_{v})\overset{iid}\sim \mathbb{U}.$
\item The marginal density of $\hat{T}_{\alpha,n}$ can be expressed as $\tilde{h}_{\alpha,n}(t)f_{\alpha}(t),$ specified as in~\eqref{hpost2} which means that 
$$
(Q_{v})\sim \mathrm{PK}_{\alpha}(\tilde{h}_{\alpha,n}\cdot f_{\alpha}):=\int_{0}^{\infty}\mathrm{PD}(\alpha|t)
\frac{\mathbb{E}[h(tB^{-1/\alpha}_{k,\frac{n}{\alpha}-k})]}{{\Psi}^{[\alpha]}_{n,k}}f_{\alpha,n}(t)dt.
$$
with distribution also appearing in \cite[eq. (3.8) and Proposition 3.3]{HJL2}.
\end{enumerate}
\end{thm}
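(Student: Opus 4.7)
The plan is to obtain Theorem~\ref{thm:post2} by lifting the first description in Theorem~\ref{thm:post1} through the $\mathrm{PY}(\alpha,0)$ posterior identity~\eqref{fpost4}. Since $P\sim \mathrm{PK}_\alpha(h\cdot f_\alpha,H)$ has total mass $T$ with density $h(t)f_\alpha(t)$, and $P\mid T=t$ has the same law as $F_{\alpha,0}\circ H$ conditioned on $T_\alpha=t$, the joint posterior of $(P,T)\mid X_1,\ldots,X_n$ factors as the posterior of $T$ (already characterized in Theorem~\ref{thm:post1}) times a $\mathrm{PY}(\alpha,0,H)$ posterior conditioned on $T_\alpha=T$. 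All partition-level likelihood contributions are carried by the $\mathrm{PD}(\alpha\mid t)$ building block, so conditionally on $T=t$ the posterior is exactly the $(\alpha,0)$ posterior at total mass $t$.

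Next I would apply~\eqref{fpost4} in its joint form with $T_\alpha$. By~\eqref{jamesidspecial}, the event $\{T_\alpha=t\}$ corresponds, on the right-hand side of~\eqref{fpost4}, to conditioning the pair $(T_{\alpha,n},B_{k,\frac{n}{\alpha}-k})$ on $T_{\alpha,n}/B^{1/\alpha}_{k,\frac{n}{\alpha}-k}=t$, while $P_{\alpha,n}$, $(d_1,\ldots,d_k)$ and the fixed atoms $\tilde X_j$ remain conditionally independent. Mixing this conditional composition representation back against the posterior of $T\mid X_1,\ldots,X_n$ yields~\eqref{fpost2}, and the Bayes-rule change of variables on $(T_{\alpha,n},B_{k,\frac{n}{\alpha}-k})$ tilted by $h(T_{\alpha,n}/B_{k,\frac{n}{\alpha}-k}^{1/\alpha})$ gives~\eqref{genjointfragden}. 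Integrating out $t$ yields the marginal density of $\hat\beta_k$ in item~3; comparing with~\eqref{hpost} identifies the conditional law of $\hat P_{\alpha,n}\mid \hat\beta_k=b$ as $\mathrm{PK}_\alpha(h^{(b^{1/\alpha})}_{\alpha,n}\cdot f_\alpha,H)$ in item~4; and averaging over $\hat\beta_k$ reads off the marginal mixing density $\tilde h_{\alpha,n}$ in item~5, matching~\cite[eq.~(3.8) and Proposition~3.3]{HJL2}.

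The main obstacle is establishing~\eqref{fpost4} in the joint form with $T_\alpha$, since Pitman's identity in~\cite[Proposition~21]{PY97} is usually stated as an equality in law of random probability measures alone. To secure this joint version I would derive~\eqref{fpost4} from~\eqref{fpost3} by the coagulation/fragmentation argument while tracking the total mass through both representations in~\eqref{jamesidspecial}, verifying that the pair $(\tilde P^{(n)}_{\alpha,0},T_\alpha\mid K_n=k)$ admits matching composition and mixture representations. Once this refinement is in place, the Gibbs $h$-tilt commutes with the composition structure, and the remaining verifications in items~1--5 reduce to Fubini and a handful of Beta integrals, with the exclusive Gibbs product form of the EPPF ensuring that everything simplifies cleanly.
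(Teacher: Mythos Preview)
Your approach is correct and lands on the same key ingredient the paper uses: the joint posterior distribution of $(P_{\alpha,0},T_\alpha)$ given $X_1,\ldots,X_n$ in the composition form~\eqref{fpost4}, coupled with the second representation in~\eqref{jamesidspecial}. Once that joint is in hand, tilting by $h$ and normalising by $\Psi^{[\alpha]}_{n,k}$ immediately produces~\eqref{genjointfragden} and the remaining items follow by marginalisation, exactly as you outline.

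The organisational difference is this. The paper does not pass through Theorem~\ref{thm:post1} at all; it proves Theorems~\ref{thm:post1} and~\ref{thm:post2} simultaneously by a single change-of-measure identity. Writing $\mathbb{E}[\Omega(P)]=\mathbb{E}[\Omega(P_{\alpha,0})h(T_\alpha)]$ and disintegrating both sides over $X_1,\ldots,X_n$, the Gibbs product form of the two EPPFs (which differ only by the scalar $\Psi^{[\alpha]}_{n,k}$) gives in one line
\[
\mathbb{E}[\Omega(P)\mid X_1,\ldots,X_n]=\frac{\mathbb{E}[\Omega(P_{\alpha,0})h(T_\alpha)\mid X_1,\ldots,X_n]}{\mathbb{E}[h(T_\alpha)\mid K_n=k]},
\]
after which one simply substitutes~\eqref{fpost3} with $T_\alpha\overset{d}=T_{\alpha,k\alpha}/B_{k\alpha,n-k\alpha}$ for Theorem~\ref{thm:post1}, or~\eqref{fpost4} with $T_\alpha\overset{d}=T_{\alpha,n}/B^{1/\alpha}_{k,n/\alpha-k}$ for Theorem~\ref{thm:post2}. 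Your route of first conditioning on $T=t$, invoking the $\mathrm{PD}(\alpha\mid t)$ posterior, and then mixing back over the posterior of $T$ is a valid disintegration of the same computation, but it introduces an extra layer (and a dependence on Theorem~\ref{thm:post1} for the posterior of $T$) that the direct Radon--Nikodym argument avoids. The obstacle you flag---securing the joint version of~\eqref{fpost4} with $T_\alpha$---is genuine and is handled in the paper simply by noting that all variables in~\eqref{fpost4} and~\eqref{jamesidspecial} are defined on a common space; your proposed verification via the coagulation/fragmentation correspondence would also work. One minor slip: in item~4 the base measure of $\hat P_{\alpha,n}$ is $\mathbb{U}$, not $H$.
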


\subsection{Proof of Theorems \ref{thm:post1} and \ref{thm:post2}}

We now proceed with the proof of both Theorems \ref{thm:post1} and \ref{thm:post2}.
\begin{proof}
We use the fact that for $X_{1},\ldots,X_{n}|P$ iid $P$ for $P\sim \mathrm{PK}_{\alpha}(h\cdot f_{\alpha},H),$ the marginal distribution of $X_{1},\ldots,X_{n}$ is equivalent to 
\begin{equation}
{\Psi}^{[\alpha]}_{n,k}\times p_{\alpha}(n_{1},\ldots,n_{k})\prod_{j=1}^{k}H(d\tilde{X}_{j}),
\label{marg}
\end{equation}
where $(\tilde{X}_{1},\ldots,\tilde{X}_{k})$ are the $k$ unique values of $X_{1},\ldots,X_{n}$, and $\mathbb{E}[h(T_{\alpha})|K_{n}=k]={\Psi}^{[\alpha]}_{n,k}$.

The posterior distribution of $P|X_{1},\ldots,X_{n}$ is determined by $\mathbb{E}[\Omega(P)|X_{1},\ldots,X_{n}]$ in the double expectation,
\begin{equation}
\mathbb{E}[\Omega(P)]= \mathbb{E}^{\nu}_{\alpha,n}\left(\mathbb{E}[\Omega(P)|X_{1},\ldots,X_{n}]\right)
\label{expect1} 
\end{equation}
where~\eqref{expect1} holds for arbitrary, hence all,  measurable functions $\Omega$,
where $\mathbb{E}^{\nu}_{\alpha,n}$ denotes the expectation with respect to the marginal distribution of $X_{1},\ldots,X_{n}$ as described in~\eqref{marg}.
Now notice there is the equivalence
$$
\mathbb{E}[\Omega(P)]=\mathbb{E}[\Omega(P_{\alpha,0})h(T_{\alpha})].
$$ 
which is equivalent to
\begin{equation}
\mathbb{E}_{\alpha,n}\left(\mathbb{E}[\Omega(P_{\alpha,0})h(T_{\alpha})|X_{1},\ldots,X_{n}]\right)
\label{expect2}
\end{equation}
where $\mathbb{E}_{\alpha,n}$ denotes expectation with respect to the marginal distribution of $X_{1},\ldots,X_{n}|P_{\alpha,0}\overset{iid}\sim P_{\alpha,0}$ for $P_{\alpha,0}\sim\mathscr{PY}(\alpha,0,H),$ which for clarity is 
$$p_{\alpha}(n_{1},\ldots,n_{k})\prod_{j=1}^{k}H(d\tilde{X}_{j}).$$
Furthermore, the inner expectation $\mathbb{E}[\Omega(P_{\alpha,0})h(T_{\alpha})|X_{1},\ldots,X_{n}]$ denotes expectation with respect to the posterior distribution of $P_{\alpha,0}$ and $T_{\alpha}$ given  $X_{1},\ldots,X_{n}$ as described in~\eqref{fpost3} and~\eqref{fpost4}.
That is,
\[
\begin{aligned}
\mathbb{E}[\Omega(P_{\alpha,0})h(T_{\alpha})|X_{1},\ldots,X_{n}] &= \mathbb{E}[\Omega(\tilde{P}^{(n)}_{\alpha,0})h(T_{\alpha,k\alpha}/B_{k,n-k\alpha})] \\
&= \mathbb{E}[\Omega(\tilde{P}^{(n)}_{\alpha,0})h(T_{\alpha,n}/B^{1/\alpha}_{k,\frac{n}{\alpha}-k})].
\end{aligned}
\]
where we use the conditional distribution of $T_{\alpha}|K_{n}=k$ as defined in~\eqref{jamesidspecial},
and $\tilde{P}^{(n)}_{\alpha,0}$ corresponds more precisely to the representation on the right hand-side of~\eqref{fpost3} and~\eqref{fpost4}, where all the respective variables are defined on the same space. Set $R_{k}=B_{k\alpha,n-k\alpha}$ and $\beta_{k}=B_{k,\frac{n}{\alpha}-k}.$
Now the Gibbs form of the EPPFs enables one to do a simple algebraic manipulation as follows
$$
\mathbb{E}[\Omega(P_{\alpha,0})h(T_{\alpha})|X_{1},\ldots,X_{n}]p_{\alpha}(n_{1},\ldots,n_{k})
$$
is equal to
$$
\frac{\mathbb{E}[\Omega(P_{\alpha,0})h(T_{\alpha})|X_{1},\ldots,X_{n}]}{\mathbb{E}[h(T_{\alpha})|K_{n}=k]}\times {\Psi}^{[\alpha]}_{n,k}\times p_{\alpha}(n_{1},\ldots,n_{k})
$$
Now using this combined with~\eqref{expect1} and~\eqref{expect2}, it follows that 
$$
\mathbb{E}[\Omega(P)|X_{1},\ldots,X_{n}]=\frac{\mathbb{E}[\Omega(P_{\alpha,0})h(T_{\alpha})|X_{1},\ldots,X_{n}]}{\mathbb{E}[h(T_{\alpha})|K_{n}=k]}
$$
completing the result.
\end{proof}
\subsection{Recovering the prediction rule in \eqref{eq:PKprediction_rule}}
As we noted, the prediction rule, that is the distribution of $X_{n+1}|X_{1},\ldots X_{n},$ can be expressed as in~\eqref{eq:PKprediction_rule} which we rewrite here for clarity
$$
\mathbb{E}[P | X_{1},\ldots,X_{n}] = \frac{{\Psi}^{[\alpha]}_{n+1,k+1}}{{\Psi}^{[\alpha]}_{n,k}}\frac{k\alpha}{n}H + \frac{{\Psi}^{[\alpha]}_{n+1,k}}{{\Psi}^{[\alpha]}_{n,k}}\sum_{j=1}^{k}\frac{n_{j}-\alpha}{n}\delta_{\tilde{X}_{j}}.
$$
However, it is not immediately clear that this equates with
$$
\mathbb{E}[\tilde{P}^{(n)}]=\mathbb{E}[\hat{R}_{k}\hat{P}_{\alpha,k\alpha}]+
\mathbb{E}[(1-\hat{R}_{k})]\sum_{j=1}^{k}\frac{n_{j}-\alpha}{n-k\alpha}\delta_{\tilde{X}_{j}}
$$
which is obtained from \eqref{fpost1}. We now provide some details for clarity. Since $\hat{P}_{\alpha,k\alpha}|\hat{R}_{k}=b$ has a Poisson-Kingman distribution hence corresponds to a species sampling model, it 
follows that $\mathbb{E}[\hat{P}_{\alpha,k\alpha}|\hat{R}_{k}=b]=H.$ Now notice that
$$
{\Psi}^{[\alpha]}_{n,k}\mathbb{E}[\hat{R}_{k}]=\int_{0}^{1}b\mathbb{E}[h(T_{\alpha,k\alpha}/b)]f_{R_{k}}(b)db
$$
where $R_{k}\sim \mathrm{Beta}(k\alpha,n-k\alpha).$ Hence 
$$
bf_{R_{k}}(b)=\frac{k\alpha}{n}f_{B_{k\alpha+1,n-k\alpha}}(b)
$$
where $f_{B_{k\alpha+1,n-k\alpha}}(b)$ is the density of $B_{k\alpha+1,n-k\alpha}\sim \mathrm{Beta}(k\alpha+1,n-k\alpha).$ Using this,
$$
{\Psi}^{[\alpha]}_{n,k}\mathbb{E}[\hat{R}_{k}]=
\frac{k\alpha}{n}\mathbb{E}[h(T_{\alpha,k\alpha}/B_{k\alpha+1,n-k\alpha})]
$$ 
Now for general $\theta>-\alpha,$ there is the known identity, see \cite{PPY92,Pit06},
$$
T_{\alpha,\theta}\overset{d}=\frac{T_{\alpha,\theta+\alpha}}{B_{\theta+\alpha,1-\alpha}}
$$
Setting $\theta=k\alpha,$ and using $B_{(k+1)\alpha,1-\alpha}B_{k\alpha+1,n-k\alpha}\overset{d}=B_{(k+1)\alpha,n+1-(k+1)\alpha},$ leads to,
$$
\frac{T_{\alpha,k\alpha}}{B_{k\alpha+1,n-k\alpha}}\overset{d}=\frac{T_{\alpha,(k+1)\alpha}}{B_{(k+1)\alpha,n+1-(k+1)\alpha}}.
$$
Hence $\mathbb{E}[h(T_{\alpha,k\alpha}/B_{k\alpha+1,n-k\alpha})]=
\mathbb{E}[h(T_{\alpha,(k+1)\alpha}/B_{(k+1)\alpha,n+1-(k+1)\alpha})]=
{\Psi}^{[\alpha]}_{n+1,k+1}.$ For $\mathbb{E}[(1-\hat{R}_{k})],$ it follows directly that,
$$ 
{\Psi}^{[\alpha]}_{n,k}\mathbb{E}[(1-\hat{R}_{k})]=\frac{n-k\alpha}{n}\mathbb{E}[h(T_{\alpha,k\alpha}/B_{k\alpha,n+1-k\alpha})]=\frac{n-k\alpha}{n}{\Psi}^{[\alpha]}_{n+1,k},
$$
by $n(1-b)f_{R_{k}}(b)=(n-k\alpha)f_{B_{k\alpha,n+1-k\alpha}}(b).$
\subsection{Interpretations of posterior components- species variety, stickbreaking and random fragmentation}\label{sec:interpret}
Posterior distribution representations encode valuable information about $P$ leading also to interesting operations. Here we sketch out some interesting results and interpretations relative to our result for $P\sim \mathrm{PK}_{\alpha}(h\cdot f_{\alpha},H).$ Following~\cite{Pit96} in a species sampling context, and certainly relevant to some of the work discussed in~\cite{Deblasi}, where $(X_{1},\ldots,X_{n})$ represent the possible values of the first $n$ animals observed in an eco-system given $K_{n}=k,$ denoting there are $k$ distinct species out of $n.$ One may represent $P=\sum_{l=1}^{\infty}P_{l}\delta_{V_{l}}$ in an almost sure fashion as $P=\sum_{j=k+1}^{\infty}P'_{j}\delta_{V'_{j}}+\sum_{j=1}^{k}\tilde{P}_{j}\delta_{\tilde{X}_{j}},$ where $(\tilde{P}_{1},\ldots,\tilde{P}_{k})\overset{d}=(1 -\hat{R}_k)(D_{1},\ldots,D_{k})$  represent the frequencies in order of appearance of the first $k$ species out of $n,$ according to $(X_{1},\ldots,X_{n}),$ that is according to a process of size-biased sampling with replacement, and 
$(P'_{j}:j\ge k+1):=(P_{i})\setminus(\tilde{P}_{1},\ldots,\tilde{P}_{k}),$ in no particular order, represent the frequencies of species types yet to be selected. This ties in with the study of new species in an additional sample of size $m$ initiated in the work of~\cite{LijoiSpecies} where $P$ follows a general Pitman-Yor process
with followup in the normalized generalized gamma case in~\cite{FavaroLijoi2009} as discussed in~\cite[Section 4.1]{Deblasi}.
That is  sampling $X_{n+1},\ldots, X_{n+m}$ additional samples given $X_{1},\ldots,X_{n},$ \cite{FavaroLijoi2009,LijoiSpecies} propose and examine the properties of $K^{(n)}_{m}$  the number of new species in the additional sample. For general $\alpha$-Gibbs priors we note that these new species come from sampling  $\sum_{j=k+1}^{\infty}P'_{j}\delta_{V'_{j}}$  which by our results may be set to $\hat{R}_{k}\hat{P}_{\alpha,k\alpha}.$ Since given $K_{n}=k,$ $\hat{P}_{\alpha,k\alpha}$ is a stable Poisson Kingman process with mixing density corresponding to the distribution of $\hat{T}_{\alpha,k\alpha},$ it follows immediately from \cite[Proposition 13]{Pit03} and scaling properties in~\eqref{scaling}, that as $m\rightarrow\infty,$ for fixed $n,$
$$
m^{-\alpha}K^{(n)}_{m}\overset{a.s}\rightarrow \hat{R}^{\alpha}_{k}\hat{T}^{-\alpha}_{\alpha,k\alpha}=\hat{\beta}_{k}\hat{T}^{-\alpha}_{\alpha,n}
$$
which is just the natural extension of the corresponding results in
\cite{FavaroLijoi2009,LijoiSpecies}, subject to a change of measure. Now setting $n=1$ in~\eqref{fpost1} and randomizing $X_{1}=\tilde{X}_{1}\sim H$ leads to the almost sure representation

\begin{equation}\label{stick}
P=\hat{R}_1\hat{P}_{\alpha,\alpha} + (1 -\hat{R}_1) \delta_{\tilde{X}_1},
\end{equation}
where one may set $\tilde{P}_{1}=(1-\hat{R}_{1}),$ where $\tilde{P}_{1}$ is the first size biased pick from $P,$ and $\hat{P}_{\alpha,\alpha}\sim \mathrm{PK}_{\alpha}(\hat{h}_{\alpha,\alpha}\cdot f_{\alpha},H).$ Although not immediately obvious, one may apply Theorem~\ref{thm:post1} to $\hat{P}_{\alpha,\alpha},$ in particular to $\hat{P}_{\alpha,\alpha}|\hat{R}_{1}=b\sim \mathrm{PK}_{\alpha}(h^{(b)}_{\alpha,\alpha}\cdot f_{\alpha},H)$ to $h^{(b)}_{\alpha,\alpha}$ in place of $h,$
with again $n=1$ to deduce a recursion leading to a stick-breaking representation, which can otherwise be deduced from~\cite{PPY92} under a change of measure for general $h$, albeit with perhaps a more transparent view of the dependence structure.

As we have mentioned, the representations in Theorem~\ref{thm:post2} are related to repeated applications of a fragmentation procedure by independent $\mathrm{PD}(\alpha,1-\alpha)$, originated in~\cite{DGM}, as expressed in more detail in~\cite[Section 3]{HJL2}. In particular, we have shown that probability masses of $\hat{P}_{\alpha,n}$ in ~\eqref{fpost2} has the conditional distribution of $(P_{k})\sim PK_{\alpha}(h\cdot f_{\alpha})$ fragmented $n$ independent times given $K_{n}=k$, as described in~\cite[eq. (3.1)]{HJL2}, since the distribution of $\hat{P}_{\alpha,n}$ coincides with that in~\cite[eq. (3.8)]{HJL2}, with $n$ in place of $r$.
 
\section{Examples}
We now present two examples. First for completeness we show how to recover the known results for the Pitman-Yor process as described in ~\cite[Corollary 20]{Pit96}. We then apply our Theorem to achieve new results for the Mittag-Leffler class which is related to the Pitman-Yor process by conditioning as described below. 

\subsection{Pitman-Yor processes}
Setting $h(t)=t^{-\theta}/\mathbb{E}[T^{-\theta}_{\alpha}]$ yields $P\sim\mathscr{PY}(\alpha,\theta,H)$, with EPPF
\begin{equation}
p_{\alpha,\theta}(n_{1},\ldots,n_{k})=\frac{\alpha^{k}\Gamma(\theta/\alpha+k)}{\Gamma(\theta/\alpha)}\frac{\Gamma(\theta)}{\Gamma(\theta+n)}\prod_{i=1}^{k} \frac{\Gamma(n_{i}-\alpha)}{\Gamma(1-\alpha)}.
\label{EPPFA}
\end{equation}
It follows that the joint density of the corresponding $\hat{R}_{k},\hat{T}_{\alpha,k\alpha},$ as described in~\eqref{jointRT} satisfies
$$
f_{B_{\theta+k\alpha,n-k\alpha}}(b)f_{\alpha,\theta+k\alpha}(t)\propto t^{-\theta}b^{\theta}f_{R_k}(b)f_{\alpha,k\alpha}(t).
$$

That is $\hat{R}_{k}\sim \Beta(\theta+k\alpha,n-k\alpha)$  independent of $\hat{T}_{\alpha,k\alpha}\overset{d}=T_{\alpha,\theta+k\alpha}.$ Where the latter implies that $\hat{P}_{\alpha,k\alpha}\overset{d}=P_{\alpha,\theta+k\alpha}\sim\mathscr{PY}(\alpha,\theta+k\alpha,H),$ which yields the result in~\cite[Corollary 20]{Pit96},
\begin{equation}\label{PYpost3}
\tilde{P}^{(n)}\overset{d}=B_{\theta+k\alpha,n-k\alpha}P_{\alpha,\theta+k\alpha} + (1 -B_{\theta+k\alpha,n-k\alpha}) \sum_{j=1}^{k} D_j \delta_{\tilde{X}_{j}},
\end{equation}
and similarly using Theorem~\ref{thm:post2}, $\hat{P}_{\alpha,n}\overset{d}=P_{\alpha,\theta+n}\sim\mathscr{PY}(\alpha,\theta+n,\mathbb{U}),$ independent of $\hat{\beta}_{k}\overset{d}=\beta_{\frac{\theta+k\alpha}{\alpha},\frac{n}{\alpha}-k}.$ To obtain
\begin{equation}\label{PYpost4}
\tilde{P}^{(n)}\overset{d}=P_{\alpha,\theta+n}\circ (B_{\frac{\theta}{\alpha}+k,\frac{n}{\alpha}-k}H+(1-B_{\frac{\theta}{\alpha}+k,\frac{n}{\alpha}-k}) \sum_{j=1}^{k} d_j \delta_{\tilde{X}_{j}}),
\end{equation}
Setting $n=1$ in~\eqref{PYpost3} and randomizing $\tilde{X}_{1}\sim H$ leads to the well-known representation~\cite{PPY92}, see also~\cite[Section 5]{JamesLamp} for references and background,

\begin{equation}\label{Stick2}
P_{\alpha,\theta}=B_{\theta+\alpha,1-\alpha}P_{\alpha,\theta+\alpha} + (1 -B_{\theta+\alpha,1-\alpha})\delta_{\tilde{X}_{1}}.
\end{equation}
Due to the special independence properties of the components and the fact that the result hold for all $\theta>-\alpha,$ allows one to obtain the well known stick-breaking representation due to~\cite{PPY92}, see also \cite{IJ2001,Pit96,PY97}, by applying~\eqref{Stick2} to $P_{\alpha,\theta+\alpha},$ then to $P_{\alpha,\theta+2\alpha}$ and so on.

\subsection{Posterior distributions of the generalized Mittag-Leffler class} As in~
\cite{HJL2,HJL3}  consider $(P_{l})\sim \mathrm{PD}(\alpha,\theta)$ with local time or $\alpha$-diversity $L_{\alpha,\theta}\overset{d}=T^{-\alpha}_{\alpha,\theta}\sim \mathrm{ML}(\alpha,\theta),$
 with density $g_{\beta,\theta}(s)=s^{\theta/\alpha}g_{\alpha}(s)/\mathbb{E}[T^{-\theta}_{\alpha}].$ It's Laplace transform as described in \cite[Section~3]{JamesLamp}, and \cite[Section 4.1]{HJL2}, see also ~\cite{GorenfloMittag} for further general background, is
\begin{equation}
\mathbb{E}\big[{\mbox
e}^{-\lambda T^{-\alpha}_{\alpha,\theta}}\big]=\mathrm{E}^{(\frac{\theta}{\alpha}+1)}_{\alpha,\theta+1}(-\lambda),
\label{MittagLeffler}
\end{equation}
where,
\begin{equation}
\mathrm{E}^{(\frac{\theta}{\alpha}+1)}_{\alpha,\theta+1}(-\lambda)=\sum_{\ell=0}^{\infty}\frac{{(-\lambda)}^{\ell}}{\ell!}\frac{
\Gamma(\frac{\theta}{\alpha}+1+\ell)\Gamma(\theta+1)}
{\Gamma(\frac{\theta}{\alpha}+1)\Gamma(\alpha
\ell+\theta+1)},\qquad
\theta>-\alpha,
\label{altM}
\end{equation}
and from~\cite[Proposition 4.4]{HJL2} there is the density
\begin{equation}
\label{tiltMLden}
g^{(0)}_{\alpha,\theta+j\alpha}(s|\lambda)=
\mathbb{P}(L_{\alpha,\theta}\in ds|N(\lambda L_{\alpha,\theta})=j)/ds=\frac{{\mbox e}^{-\lambda s}g_{\alpha,\theta+j\alpha}(s)}{\mathrm{E}^{(\frac{\theta}{\alpha}+j+1)}_{\alpha,\theta+j\alpha+1}(-\lambda)}
\end{equation}
where $(N(t),t\ge 0)$ denotes a standard Poisson process with mean intensity $\mathbb{E}[N(t)]=t,$ hence $N(\lambda L_{\alpha,\theta})$ is a mixed Poisson random variable. Then as in ~\cite{HJL2,HJL3} the conditional distribution of $(P_{l})|N(\lambda L_{\alpha,\theta})=j$ for $\lambda>0$ and fixed $j\in \{0,1,2,\ldots\}$ has the following distribution
\begin{equation}
\mathbb{L}^{(0)}_{\alpha,\theta+j\alpha}(\lambda):=\int_{0}^{\infty}\mathrm{PD}(\alpha|s^{-\frac1\alpha})\,
g^{(0)}_{\alpha,\theta+j\alpha}(s|\lambda)\,ds,
\label{genMittaglambda2}
\end{equation}
and, hence, in this case,
$$
h(t):=\vartheta^{(\lambda)}_{\alpha,\theta+j\alpha}(t)=\frac{{\mbox e}^{-\lambda t^{-\alpha}}t^{-\theta-j\alpha}}{\mathrm{E}^{(\frac{\theta}{\alpha}+j+1)}_{\alpha,\theta+j\alpha+1}(-\lambda)\mathbb{E}[T_{\alpha}^{-\theta-j\alpha}]}.
$$
Since $\theta>-\alpha$ is quite general we may focus on the case where $j=0,$ it is notable that we will encounter the case with $k$ in place of $j$ when describing the posterior distribution. Specifically we consider $(P_{v}(\lambda))\sim \mathbb{L}^{(0)}_{\alpha,\theta}(\lambda)$ which is equivalent to $\mathrm{PK}_{\alpha}(\vartheta^{(\lambda)}_{\alpha,\theta}\cdot f_{\alpha}).$

Furthermore, as in~\cite{HJL2}, set $Z^{\frac{n-k\alpha}{\alpha}}_{\alpha,\theta+k\alpha}=T^{-\alpha}_{\alpha,\theta+k\alpha}B^{\alpha}_{\theta+k\alpha,n-k\alpha}\overset{d}=T^{-\alpha}_{\alpha,\theta+n}B_{\frac{\theta}{\alpha}+k,\frac{n}{\alpha}-k}.$ 
Then from~\cite[Propositions 4.1 and 4.2]{HJL2},  
\begin{equation}
\mathbb{E}\big[{\mbox
e}^{-\lambda Z^{\frac{n-k\alpha}{\alpha}}_{\alpha,\theta+k\alpha}}\big]=\mathrm{E}^{(\frac{\theta}{\alpha}+k)}_{\alpha,\theta+n}(-\lambda),
\label{MittagLeffler2}
\end{equation}
where $\mathrm{E}^{(\frac{\theta}{\alpha}+k)}_{\alpha,\theta+n}(-\lambda)=\mathbb{E}\bigg[\mathrm{E}^{(\frac{\theta+n}{\alpha})}_{\alpha,\theta+n}\big(-\lambda {B_{\frac{\theta}{\alpha}+k,\frac{n}{\alpha}-k}}\big)\bigg]$ can be expressed as
$$
\mathrm{E}^{(\frac{\theta}{\alpha}+k)}_{\alpha,\theta+n}(-\lambda)=
\sum_{\ell=0}^{\infty}\frac{{(-\lambda)}^{\ell}}{\ell!}\frac{\Gamma(\frac{\theta}{\alpha}+k+\ell)\Gamma(
\theta+n)}{\Gamma(\frac{\theta}{\alpha}+k)\Gamma(\alpha
\ell+\theta+n)},
$$
The corresponding EPPF, as given in~\cite[Proposition 4.5]{HJL2}, can be expressed as 
$$
p^{(0)}_{\alpha,\theta}(n_{1},\ldots,n_{k}|\lambda)=\frac{\mathrm{E}^{(\frac{\theta}{\alpha}+k)}_{\alpha,\theta+n}(-\lambda)}{\mathrm{E}^{(\frac{\theta}{\alpha}+1)}_{\alpha,\theta+1}(-\lambda)}p_{\alpha,\theta}(n_{1},\ldots,n_{k})
$$
We say that $P=P^{(\lambda)}_{\alpha,\theta}:=\sum_{l=1}^{\infty}P_{l}(\lambda)\delta_{V_{l}}\sim \mathbb{L}_{\alpha,\theta}(\lambda,H):=\mathrm{PK}_{\alpha}(\vartheta^{(\lambda)}_{\alpha,\theta}\cdot f_{\alpha},H)$ meaning its distribution is equivalent to $P_{\alpha,\theta}|N(\lambda L_{\alpha,\theta})=0$ for $P_{\alpha,\theta}\sim \mathscr{PY}(\alpha,\theta,H).$ The corresponding prediction rule for $X_{n+1}|X_{1},\ldots,X_{n},$ is
$$
\frac{\mathrm{E}^{(\frac{\theta}{\alpha}+k+1)}_{\alpha,\theta+n+1}(-\lambda)}{\mathrm{E}^{(\frac{\theta}{\alpha}+k)}_{\alpha,\theta+n}(-\lambda)}\frac{\theta+k\alpha}{\theta+n}H+
\frac{\mathrm{E}^{(\frac{\theta}{\alpha}+k)}_{\alpha,\theta+n+1}(-\lambda)}{\mathrm{E}^{(\frac{\theta}{\alpha}+k)}_{\alpha,\theta+n}(-\lambda)}\sum_{j=1}^{k}\frac{n_{j}-\alpha}{\theta+n}\delta_{\tilde{X}_{j}}
$$

We now proceed to describe the posterior distribution and related quantities.
\begin{prop}\label{Mittagprop}Consider the results in Theorem~\ref{thm:post1} In this case for $P$ specified as  $P^{(\lambda)}_{\alpha,\theta}:=\sum_{l=1}^{\infty}P_{l}(\lambda)\delta_{V_{l}}\sim \mathbb{L}_{\alpha,\theta}(\lambda,H):=\mathrm{PK}_{\alpha}(\vartheta^{(\lambda)}_{\alpha,\theta}\cdot f_{\alpha},H)$ the posterior distribution of $P^{(\lambda)}_{\alpha,\theta}|X_{1},\ldots,X_{n}$ can be expressed as
$$
R_{\alpha,\theta+k\alpha}(\lambda)P^{(\lambda R^{\alpha}
_{\alpha,\theta+k\alpha}(\lambda))}_{\alpha,\theta+k\alpha}+(1-R_{\alpha,\theta+k\alpha}(\lambda))\sum_{j=1}^{k} D_j \delta_{\tilde{X}_{j}},
$$
\begin{enumerate}
\item $(D_1, \ldots, D_k) \sim \mathrm{Dir}(n_1 - \alpha, \ldots, n_k - \alpha)$ is independent of $R_{\alpha,\theta+k\alpha}(\lambda)$ and $P^{(\lambda R^{\alpha}.
_{\alpha,\theta+k\alpha}(\lambda))}_{\alpha,\theta+k\alpha}.$
\item $P^{(\lambda R^{\alpha}
_{\alpha,\theta+k\alpha}(\lambda))}_{\alpha,\theta+k\alpha}|R_{\alpha,\theta+k\alpha}(\lambda)=b\sim \mathbb{L}_{\alpha,\theta+k\alpha}(\lambda b^{\alpha},H):=\mathrm{PK}_{\alpha}(\vartheta^{(\lambda b^{\alpha})}_{\alpha,\theta+k\alpha}\cdot f_{\alpha},H),$ with local time at $1$ or $
\alpha$-diversity having density
$g^{(0)}_{\alpha,\theta+k\alpha}(s|\lambda b^{\alpha})$ as specified in ~\eqref{tiltMLden}.
\item $R_{\alpha,\theta+k\alpha}(\lambda)$ has density
$$
\frac{\mathrm{E}^{(\frac{\theta}{\alpha}+k)}_{\alpha,\theta+k\alpha}(-\lambda b^{\alpha})}{\mathrm{E}^{(\frac{\theta}{\alpha}+k)}_{\alpha,\theta+n}(-\lambda)}f_{B_{\theta+k\alpha,n-k\alpha}}(b)
$$
\end{enumerate}
\end{prop}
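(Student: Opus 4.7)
The plan is to specialize Theorem~\ref{thm:post1} to the choice $h(t)=\vartheta^{(\lambda)}_{\alpha,\theta}(t)$ and then recognize each of the resulting densities as a Mittag-Leffler-tilted object using the Laplace transform identities \eqref{MittagLeffler} and \eqref{MittagLeffler2}. The only preliminary computation needed is
$$
h(t/b) \propto e^{-\lambda b^\alpha t^{-\alpha}} b^\theta t^{-\theta},
$$
with a multiplicative constant depending on $\lambda,\alpha,\theta$ but not on $b,t$; this single expression drives everything.

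Feeding this into the joint density formula \eqref{jointRT} and using the two elementary density-shift identities $b^\theta f_{R_k}(b)\propto f_{B_{\theta+k\alpha,n-k\alpha}}(b)$ and $t^{-\theta} f_{\alpha,k\alpha}(t)\propto f_{\alpha,\theta+k\alpha}(t)$, the joint density of $(\hat R_k,\hat T_{\alpha,k\alpha})$ becomes proportional to $e^{-\lambda b^\alpha t^{-\alpha}} f_{B_{\theta+k\alpha,n-k\alpha}}(b)\, f_{\alpha,\theta+k\alpha}(t)$. Reading off the conditional distribution of $\hat T_{\alpha,k\alpha}$ given $\hat R_k=b$ and changing variables via $s=t^{-\alpha}$, I obtain exactly the tilted Mittag-Leffler density $g^{(0)}_{\alpha,\theta+k\alpha}(s\mid \lambda b^\alpha)$ from \eqref{tiltMLden}. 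Theorem~\ref{thm:post1}(5) then identifies the conditional law $\hat P_{\alpha,k\alpha}\mid \hat R_k=b$ with $\mathbb{L}_{\alpha,\theta+k\alpha}(\lambda b^\alpha,H)=\mathrm{PK}_{\alpha}(\vartheta^{(\lambda b^\alpha)}_{\alpha,\theta+k\alpha}\cdot f_\alpha,H)$, establishing item~(2).

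For item~(3), I integrate out $t$ from the joint density. The inner integral $\int e^{-\lambda b^\alpha t^{-\alpha}} f_{\alpha,\theta+k\alpha}(t)\,dt$ is the Laplace transform of $T^{-\alpha}_{\alpha,\theta+k\alpha}$ evaluated at $\lambda b^\alpha$, which by \eqref{MittagLeffler} (applied with $\theta$ shifted to $\theta+k\alpha$) is a Mittag-Leffler function of $-\lambda b^\alpha$. The normalizer $\Psi^{[\alpha]}_{n,k}$ is then obtained by integrating this Mittag-Leffler function against $f_{B_{\theta+k\alpha,n-k\alpha}}(b)$; this is precisely the mixed Laplace transform of $Z^{(n-k\alpha)/\alpha}_{\alpha,\theta+k\alpha}=T^{-\alpha}_{\alpha,\theta+k\alpha}B^\alpha_{\theta+k\alpha,n-k\alpha}$ recorded in \eqref{MittagLeffler2}, and it collapses to the single Mittag-Leffler function in the $\theta+n$ variable appearing in the stated denominator. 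Item~(1) is inherited directly from Theorem~\ref{thm:post1}(1), with no extra work.

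The main obstacle is purely bookkeeping. Several $\mathbb{E}[T_\alpha^{-\beta}]$ factors, the Beta-function ratio $B(\theta+k\alpha,n-k\alpha)/B(k\alpha,n-k\alpha)$, and the normalizers hidden inside $\vartheta^{(\lambda)}_{\alpha,\theta}$ and inside the definition of $f_{\alpha,\theta+k\alpha}$ must all be carried through so that they cancel in exactly the right pattern to leave the Mittag-Leffler ratio in the statement. Beyond this careful accounting, the proof is a direct specialization of Theorem~\ref{thm:post1} combined with the Laplace-transform identities \eqref{MittagLeffler}--\eqref{MittagLeffler2} already available in the paper.
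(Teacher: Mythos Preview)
Your proposal is correct and follows essentially the same route as the paper's own proof: specialize Theorem~\ref{thm:post1} to $h=\vartheta^{(\lambda)}_{\alpha,\theta}$, observe that $h(t/b)\propto e^{-\lambda b^{\alpha}t^{-\alpha}}b^{\theta}t^{-\theta}$, absorb the $b^{\theta}$ and $t^{-\theta}$ factors into $f_{B_{\theta+k\alpha,n-k\alpha}}$ and $f_{\alpha,\theta+k\alpha}$ respectively, and then identify the conditional and marginal densities via \eqref{MittagLeffler} and \eqref{MittagLeffler2}. Your account is simply a more detailed version of the paper's short sketch; the only extra care you note---tracking the $\mathbb{E}[T_{\alpha}^{-\beta}]$ and Beta-function constants---is exactly the bookkeeping the paper suppresses with the proportionality signs.
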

\begin{proof}Checking Theorem~\ref{thm:post1} with $h(t)=\vartheta^{(\lambda)}_{\alpha,\theta}(t),$ one has from~\eqref{jointRT} that in this case
$$
f_{\hat{R}_k,\hat{T}_{\alpha,k\alpha}}(b,t) \propto {\mbox e}^{-\lambda t^{-\alpha}b^{\alpha}}t^{-\theta}b^{\theta}f_{R_k}(b)f_{\alpha,k\alpha}(t),
$$
hence 
$
f_{\hat{R}_k,\hat{T}_{\alpha,k\alpha}}(b,t) \propto {\mbox e}^{-\lambda t^{-\alpha}b^{\alpha}}f_{B_{\theta+k\alpha,n-k\alpha}}(b)f_{\alpha,\theta+k\alpha}(t).
$
Hence one can deduce from \eqref{MittagLeffler2} that one can set 
$\hat{R}_{k}=R_{\alpha,\theta+k\alpha}(\lambda)$ with marginal density as specified. Similarly it follows that $\hat{T}_{\alpha,k\alpha}| \hat{R}_{k}=b$ has the density $g^{(0)}_{\alpha,\theta+k\alpha}(s|\lambda b^{\alpha}).$ Other details of the result are then provided by 
Theorem~\ref{thm:post1} and the definition of $\mathbb{L}_{\alpha,\theta+k\alpha}(\lambda b^{\alpha},H).$
\end{proof}
Similar to~\eqref{PYpost4}, Proposition~\ref{Mittagprop} yields
\begin{equation}\label{MittagStick2}
P^{(\lambda) }_{\alpha,\theta}=
R_{\alpha,\theta+\alpha}(\lambda)P^{(\lambda R^{\alpha}
_{\alpha,\theta+\alpha}(\lambda))}_{\alpha,\theta+\alpha}+(1-R_{\alpha,\theta+\alpha}(\lambda))\delta_{\tilde{X}_{1}}.
\end{equation}
where $(1-R_{\alpha,\theta+\alpha}(\lambda))$ is the first size-biased pick from $P^{(\lambda) }_{\alpha,\theta}.$
While $R_{\alpha,\theta+\alpha}(\lambda)$ and $P^{(\lambda R^{\alpha}
_{\alpha,\theta+\alpha}(\lambda))}_{\alpha,\theta+\alpha}$ in~\eqref{MittagStick2} are not independent, one may develop a recursive formula by using $P^{(\lambda R^{\alpha}
_{\alpha,\theta+\alpha}(\lambda))}_{\alpha,\theta+\alpha}|R^{\alpha}
_{\alpha,\theta+\alpha}(\lambda)=b_{1}$, which is $P^{(\lambda b^{\alpha}_{1})}_{\alpha,\theta+\alpha}$ and hence  in the same family of distributions as $P^{(\lambda) }_{\alpha,\theta}$, with $\lambda b^{\alpha}_{1}$ in place of $\lambda$. Iterating this, yields a stick-breaking representation. We leave the details to the interested reader. 

The proof of the next results follows in a similar fashion as for Proposition~\ref{Mittagprop} using Theorem~ \ref{thm:post2} and the definitions above. We omit the details which are now fairly straightforward.  
\begin{prop}Consider the results in Theorem~ \ref{thm:post2}. In this case for $P$ specified as  $P^{(\lambda)}_{\alpha,\theta}:=\sum_{l=1}^{\infty}P_{l}(\lambda)\delta_{V_{l}}\sim \mathbb{L}_{\alpha,\theta}(\lambda,H):=\mathrm{PK}_{\alpha}(\vartheta^{(\lambda)}_{\alpha,\theta}\cdot f_{\alpha},H)$ the posterior distribution of $P^{(\lambda)}_{\alpha,\theta}|X_{1},\ldots,X_{n}$ can be expressed as
$$
P^{(\lambda\beta_{
\frac{\theta}{\alpha}+k,\frac{n}{\alpha}-k}(\lambda))}_{\alpha,\theta+n}\circ (\beta_{
\frac{\theta}{\alpha}+k,\frac{n}{\alpha}-k}(\lambda)H+(1-\beta_{\frac{\theta}{\alpha}+k,\frac{n}{\alpha}-k}(\lambda))\sum_{j=1}^{k}d_j\delta_{\tilde{X}_{j}})
$$
\begin{enumerate}
\item $(d_1, \ldots, d_k) \sim \mathrm{Dir}(\frac{n_1 - \alpha}{\alpha}, \ldots, \frac{n_k - \alpha}{\alpha})$ is independent of  $\beta_{
\frac{\theta}{\alpha}+k,\frac{n}{\alpha}-k}(\lambda)$ and $P^{(\lambda\beta_{
\frac{\theta}{\alpha}+k,\frac{n}{\alpha}-k}(\lambda))}_{\alpha,\theta+n}.$
\item $\beta_{
\frac{\theta}{\alpha}+k,\frac{n}{\alpha}-k}(\lambda)$ has density
$$
\frac{\mathrm{E}^{(\frac{\theta+n}{\alpha})}_{\alpha,\theta+n}(-\lambda b)}{\mathrm{E}^{(\frac{\theta}{\alpha}+k)}_{\alpha,\theta+n}(-\lambda)}f_{B_{
\frac{\theta}{\alpha}+k,\frac{n}{\alpha}-k}}(b)
$$
\item $P^{(\lambda\beta_{
\frac{\theta}{\alpha}+k,\frac{n}{\alpha}-k}(\lambda))}_{\alpha,\theta+n}|\beta_{
\frac{\theta}{\alpha}+k,\frac{n}{\alpha}-k}(\lambda)=b\sim \mathbb{L}_{\alpha,\theta+n}(\lambda b,\mathbb{U}):=\mathrm{PK}_{\alpha}(\vartheta^{(\lambda b)}_{\alpha,\theta+n}\cdot f_{\alpha},\mathbb{U}),$with local time at $1$ or $
\alpha$-diversity having density
$g^{(0)}_{\alpha,\theta+n}(s|\lambda b)$ as specified in ~\eqref{tiltMLden}.
\end{enumerate}
\end{prop}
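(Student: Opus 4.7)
The plan is to mirror the proof of Proposition~\ref{Mittagprop}, only now invoking Theorem~\ref{thm:post2} in place of Theorem~\ref{thm:post1}. First I would substitute $h(t)=\vartheta^{(\lambda)}_{\alpha,\theta}(t)$ into the joint density formula~\eqref{genjointfragden} for $(\hat{\beta}_k,\hat{T}_{\alpha,n})$. The crucial algebraic observation is that the rescaling $t\mapsto t/b^{1/\alpha}$ converts the exponential tilt $e^{-\lambda t^{-\alpha}}$ into $e^{-\lambda b t^{-\alpha}}$ and produces a power factor $b^{\theta/\alpha}t^{-\theta}$; absorbing these powers into $f_{\beta_k}(b)$ and $f_{\alpha,n}(t)$ should yield, up to a normalizing constant,
\begin{equation*}
f_{\hat{\beta}_k,\hat{T}_{\alpha,n}}(b,t)\propto e^{-\lambda b t^{-\alpha}}\,f_{B_{\frac{\theta}{\alpha}+k,\frac{n}{\alpha}-k}}(b)\,f_{\alpha,\theta+n}(t).
\end{equation*}

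From this factorization the three claims of the proposition will read off as follows. The conditional density of $\hat{T}_{\alpha,n}$ given $\hat{\beta}_k=b$ is, after the change of variable $s=t^{-\alpha}$, exactly the tilted Mittag-Leffler density $g^{(0)}_{\alpha,\theta+n}(s\mid\lambda b)$ of~\eqref{tiltMLden}; by the very definition of $\mathbb{L}_{\alpha,\theta+n}(\lambda b,\mathbb{U})$ this gives item~(3). The marginal density of $\hat{\beta}_k$ will then follow by integrating out $t$ and using~\eqref{MittagLeffler2}, since $\mathbb{E}[e^{-\lambda b T^{-\alpha}_{\alpha,\theta+n}}]=\mathrm{E}^{(\frac{\theta+n}{\alpha})}_{\alpha,\theta+n}(-\lambda b)$, and the full normalizing constant $\mathbb{E}[h(T_{\alpha,n}/\beta_k^{1/\alpha})]$ is identified, via the beta-mixture identity stated right after~\eqref{MittagLeffler2}, with the denominator $\mathrm{E}^{(\frac{\theta}{\alpha}+k)}_{\alpha,\theta+n}(-\lambda)$ displayed in item~(2). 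Item~(1), the independence of the Dirichlet vector $(d_1,\ldots,d_k)$ from $(\hat{\beta}_k,\hat{P}_{\alpha,n})$, is inherited verbatim from Theorem~\ref{thm:post2}(1), since that independence holds for every $h$.

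The main place a slip could occur is the bookkeeping of the two constants $\mathbb{E}[T_\alpha^{-\theta}]$ and $\mathrm{E}^{(\frac{\theta}{\alpha}+1)}_{\alpha,\theta+1}(-\lambda)$ hidden in the definition of $\vartheta^{(\lambda)}_{\alpha,\theta}$: one must verify that they cancel cleanly against the constants produced when re-expressing $b^{\theta/\alpha}f_{\beta_k}(b)$ as $f_{B_{\frac{\theta}{\alpha}+k,\frac{n}{\alpha}-k}}(b)$ and $t^{-\theta}f_{\alpha,n}(t)$ as $f_{\alpha,\theta+n}(t)$. This is routine but is the one step that must be handled carefully before invoking \eqref{MittagLeffler2} to collapse the integral to the advertised closed form.
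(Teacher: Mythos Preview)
Your proposal is correct and follows precisely the approach the paper indicates: the paper omits the proof, stating only that it ``follows in a similar fashion as for Proposition~\ref{Mittagprop} using Theorem~\ref{thm:post2} and the definitions above,'' and your outline does exactly this, carrying out the analogous substitution of $h(t)=\vartheta^{(\lambda)}_{\alpha,\theta}(t)$ into~\eqref{genjointfragden} and reading off the three items from the resulting factorization. One small note: the identity $\mathbb{E}[e^{-\lambda b\,T^{-\alpha}_{\alpha,\theta+n}}]=\mathrm{E}^{(\frac{\theta+n}{\alpha})}_{\alpha,\theta+n}(-\lambda b)$ is perhaps more directly an instance of~\eqref{MittagLeffler} (with $\theta+n$ in place of $\theta$, in the notation used just after~\eqref{MittagLeffler2}) than of~\eqref{MittagLeffler2} itself, but this is a labeling detail and does not affect the argument.
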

The next result states that the marginal distribution $P^{(\lambda\beta_{
\frac{\theta}{\alpha}+k,\frac{n}{\alpha}-k}(\lambda))}_{\alpha,\theta+n}$ belongs to a family of distributions described in \cite[ eq. (4.13) and (4.14)]{HJL2}. Which follows from a direct comparison.

\begin{prop}
The marginal distribution of  $P^{(\lambda\beta_{
\frac{\theta}{\alpha}+k,\frac{n}{\alpha}-k}(\lambda))}_{\alpha,\theta+n}\sim\mathbb{L}^{(n,k)}_{\alpha,\theta}(\lambda,\mathbb{U}),$ where the law of its ranked masses $(Q_{v}(\lambda))$ is 
$$
\mathbb{L}^{(n,k)}_{\alpha,\theta}(\lambda)=\int_{0}^{\infty}\mathrm{PD}(\alpha|s^{-\frac{1}{\alpha}})\,g^{(n,k)}_{\alpha,\theta}(s|\lambda)\,ds
$$
with , for $g_{\alpha,\theta+n}(s)$ the density of $T^{-\alpha}_{\alpha,\theta+n}\sim \mathrm{ML}(\alpha,\theta+n)$,
$$
g^{(n,k)}_{\alpha,\theta}(s|\lambda)=\frac{_1F_1(\frac{\theta}{\alpha}+k;\frac{\theta}{\alpha}+\frac{n}{\alpha};-\lambda s)}{\mathrm{E}^{(\frac{\theta}{\alpha}+k)}_{\alpha,\theta+n}(-\lambda)}g_{\alpha,\theta+n}(s)
$$
where, 
$
\mathbb{E}\left[{\mbox e}^{-\lambda B_{\frac{\theta}{\alpha}+ k,\frac{n}{\alpha}-k}}\right]
= {_1F_1\left(\frac{\theta}{\alpha}+k;\frac{\theta}{\alpha}+\frac{n}{\alpha};-\lambda \right)}
$
is a confluent hypergeometric function of the first kind.
\end{prop}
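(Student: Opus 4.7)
The plan is to obtain the marginal law of $P^{(\lambda\hat{\beta}_{k})}_{\alpha,\theta+n}$ by conditioning on $\hat{\beta}_{k}:=\beta_{\frac{\theta}{\alpha}+k,\frac{n}{\alpha}-k}(\lambda)$, invoking the preceding proposition for the conditional law, and then integrating out $\hat{\beta}_{k}$ against its explicit density. The key analytical ingredient is to recognize that the resulting integral in $b$ collapses, via a Laplace-transform cancellation, to exactly the Laplace transform of $B_{\frac{\theta}{\alpha}+k,\frac{n}{\alpha}-k}$, which by standard integral representation equals the Kummer function ${}_{1}F_{1}(\frac{\theta}{\alpha}+k;\frac{\theta}{\alpha}+\frac{n}{\alpha};-\lambda s)$ appearing in $g^{(n,k)}_{\alpha,\theta}(s|\lambda)$.

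First I would recall from the preceding proposition that, conditionally on $\hat{\beta}_{k}=b$, $P^{(\lambda b)}_{\alpha,\theta+n}$ has law $\mathbb{L}_{\alpha,\theta+n}(\lambda b,\mathbb{U})=\mathrm{PK}_{\alpha}(\vartheta^{(\lambda b)}_{\alpha,\theta+n}\cdot f_{\alpha},\mathbb{U})$, so its ranked masses follow $\int_{0}^{\infty}\mathrm{PD}(\alpha|s^{-1/\alpha})\,g^{(0)}_{\alpha,\theta+n}(s|\lambda b)\,ds$, with mixing density proportional to $e^{-\lambda bs}g_{\alpha,\theta+n}(s)$ by~\eqref{tiltMLden}. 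Averaging against $f_{\hat{\beta}_{k}}(b)$ from the preceding proposition and applying Fubini to pull the $\mathrm{PD}(\alpha|s^{-1/\alpha})$ factor outside, the problem reduces to evaluating the inner integral $\int_{0}^{1} g^{(0)}_{\alpha,\theta+n}(s|\lambda b)\,f_{\hat{\beta}_{k}}(b)\,db$ and identifying it as $g^{(n,k)}_{\alpha,\theta}(s|\lambda)$.

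The crux is a single cancellation: the normalizing constant of $g^{(0)}_{\alpha,\theta+n}(s|\lambda b)$ against the reference density $g_{\alpha,\theta+n}(s)$ is the Laplace-moment $\mathbb{E}[e^{-\lambda b L_{\alpha,\theta+n}}]$, which coincides with the factor $\mathrm{E}^{(\frac{\theta+n}{\alpha})}_{\alpha,\theta+n}(-\lambda b)$ in the numerator of $f_{\hat{\beta}_{k}}(b)$. Once these cancel, what remains is
$$
g_{\alpha,\theta+n}(s)\cdot\frac{1}{\mathrm{E}^{(\frac{\theta}{\alpha}+k)}_{\alpha,\theta+n}(-\lambda)}\int_{0}^{1}e^{-\lambda bs}f_{B_{\frac{\theta}{\alpha}+k,\frac{n}{\alpha}-k}}(b)\,db,
$$
and the remaining integral is precisely $\mathbb{E}[e^{-\lambda sB_{\frac{\theta}{\alpha}+k,\frac{n}{\alpha}-k}}]={}_{1}F_{1}(\frac{\theta}{\alpha}+k;\frac{\theta}{\alpha}+\frac{n}{\alpha};-\lambda s)$, yielding exactly $g^{(n,k)}_{\alpha,\theta}(s|\lambda)$.

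The main obstacle, which is really only bookkeeping, is verifying the Laplace-moment identity behind the cancellation: one must confirm that $\mathrm{E}^{(\frac{\theta+n}{\alpha})}_{\alpha,\theta+n}(-\mu)=\mathrm{E}^{(\frac{\theta+n}{\alpha}+1)}_{\alpha,\theta+n+1}(-\mu)$, which is the degenerate case $k=n/\alpha$ of~\eqref{MittagLeffler2} (since $B_{(\theta+n)/\alpha,0}\equiv 1$) and, equivalently, is a routine term-by-term check on the series~\eqref{altM}. Once this alignment of the Prabhakar-type Mittag-Leffler parameters across~\eqref{MittagLeffler},~\eqref{MittagLeffler2} and~\eqref{tiltMLden} is in place, the remaining comparison with the definition of $\mathbb{L}^{(n,k)}_{\alpha,\theta}(\lambda,\mathbb{U})$ is immediate, consistent with the author's remark that the result follows by direct comparison.
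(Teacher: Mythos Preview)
Your proposal is correct and carries out explicitly what the paper only asserts by ``direct comparison'' with \cite[eq.~(4.13), (4.14)]{HJL2}; the paper gives no proof beyond that phrase. Your route---mixing the conditional law $\mathbb{L}_{\alpha,\theta+n}(\lambda b,\mathbb{U})$ from the preceding proposition against the density $f_{\hat{\beta}_{k}}$, observing the cancellation of $\mathrm{E}^{(\frac{\theta+n}{\alpha}+1)}_{\alpha,\theta+n+1}(-\lambda b)=\mathrm{E}^{(\frac{\theta+n}{\alpha})}_{\alpha,\theta+n}(-\lambda b)$, and reducing to the Beta Laplace transform---is exactly the kind of computation the author has in mind, and your check of the parameter-shift identity via~\eqref{altM} (or the degenerate $B_{(\theta+n)/\alpha,0}\equiv1$ case of~\eqref{MittagLeffler2}) is the right way to close the only nontrivial gap. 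An equivalent one-step alternative is to specialize Theorem~\ref{thm:post2}(5) directly: plugging $h=\vartheta^{(\lambda)}_{\alpha,\theta}$ into $\tilde{h}_{\alpha,n}$ of~\eqref{hpost2}, the factor $\beta_{k}^{\theta/\alpha}$ converts the $\mathrm{Beta}(k,\tfrac{n}{\alpha}-k)$ expectation into a $\mathrm{Beta}(\tfrac{\theta}{\alpha}+k,\tfrac{n}{\alpha}-k)$ expectation and yields ${}_{1}F_{1}(\tfrac{\theta}{\alpha}+k;\tfrac{\theta+n}{\alpha};-\lambda t^{-\alpha})$ immediately, after which the change $s=t^{-\alpha}$ gives $g^{(n,k)}_{\alpha,\theta}(s|\lambda)$. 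Both routes are the same calculation organized differently; yours is fine.
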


\begin{rem}
We considered demonstrating our results for the family $\mathbb{P}^{[j]}(\lambda)=\int_{0}^{\infty}\mathrm{PD}(\alpha|t)f^{[j]}{\alpha}(t|\lambda) dt$, which consists of size-biased generalized gamma processes, where $f^{[j]}(t|\lambda)\propto\lambda^{j}{\mbox e}^{-\lambda}f_{\alpha}(t)$ for $j=0,1,2,\ldots$. However, there is considerable overhead in notation, and obtaining the finest results requires further developments that are available to us but may distract from the main developments presented here. See~\cite{JamesStick,JamesFragblock} for more on this class.
\end{rem}

\begin{funding}
L.F. James was supported in part by grants RGC-GRF 16301521, of the Research Grants Council (RGC) of the Hong Kong SAR. 
\end{funding}


\end{document}